\newtheorem{lemma}{Lemma}[section]
\newtheorem{proposition}[lemma]{Proposition}
\newtheorem{corollary}[lemma]{Corollary}
\newtheorem{theorem}[lemma]{Theorem}
\newtheorem*{claim}{Claim}
\theoremstyle{definition}
\newtheorem{definition}[lemma]{Definition}
\newtheorem{example}[lemma]{Example}
\newtheorem{remark}[lemma]{Remark}
\crefname{section}{Section}{Sections}
\crefname{subsection}{\S}{\S\S}
\crefname{definition}{definition}{definitions}
\crefname{ex}{example}{examples}
\crefname{remark}{remark}{remarks}
\crefname{lemma}{lemma}{lemmas}
\crefname{proposition}{proposition}{propositions}
\crefname{theorem}{theorem}{theorems}
\crefname{enumi}{}{}
\crefname{equation}{}{}
\numberwithin{equation}{section}
\newcommand{\cst}{\ensuremath{\mathrm{C}^*}}
\newcommand{\cat}[1]{\textsc{#1}}
\newcommand{\I}{\mathds{1}}
\newcommand{\cA}{\mathcal{A}}
\newcommand{\cH}{\mathcal{H}}
\newcommand{\cI}{\mathcal{I}}
\newcommand{\cJ}{\mathcal{J}}
\newcommand{\cC}{\mathcal{C}}
\newcommand{\cT}{\mathcal{T}}
\newcommand{\bC}{\mathbb{C}}
\newcommand{\bR}{\mathbb{R}}
\newcommand{\bT}{\mathbb{T}}
\newcommand{\bG}{\mathbb{G}}
\newcommand{\bH}{\mathbb{H}}
\newcommand{\bS}{\mathbb{S}}
\newcommand{\bZ}{\mathbb{Z}}
\DeclareMathOperator{\cK}{\mathcal{K}}
\DeclareMathOperator{\cB}{\mathcal{B}}
\DeclareMathOperator{\bK}{\mathbb{K}}
\DeclareMathOperator{\CCR}{CCR}
\DeclareMathOperator{\C}{C}
\DeclareMathOperator{\Cu}{C_{\text{\tiny{\rm u\!}}}}
\DeclareMathOperator{\Cr}{C_{\text{\tiny{\rm r\!}}}}
\DeclareMathOperator{\li}{\mathnormal{L}^\infty}
\DeclareMathOperator{\Pol}{Pol}
\DeclareMathOperator{\Irr}{Irr}
\title{Compact quantum group structures on type-I $\mathrm{C}^*$-algebras}
\author{Alexandru Chirvasitu}
\address{Department of Mathematics, University at Buffalo} \email{achirvas@buffalo.edu}
\author{Jacek Krajczok}
\address{Institute of Mathematics of the Polish Academy of Sciences, Warsaw}\email{jkrajczok@impan.pl}
\author{Piotr M.~So{\l}tan}
\address{Department of Mathematical Methods in Physics, Faculty of Physics, University of Warsaw}\email{piotr.soltan@fuw.edu.pl}
\begin{document}

\begin{abstract}
  We prove a number of results having to do with equipping type-I $\mathrm{C}^*$-algebras with compact quantum group structures, the two main ones being that such a compact quantum group is necessarily co-amenable, and that if the $\mathrm{C}^*$-algebra in question is an extension of a non-zero finite direct sum of elementary $\mathrm{C}^*$-algebras by a commutative unital $\mathrm{C}^*$-algebra then it must be finite-dimensional. 
\end{abstract}

\keywords{compact quantum group, quantum space, non-commutative topology, type-I $\mathrm{C}^*$-algebra}

\subjclass[2020]{46L89, 46L85, 20G42}
% my (obvliusly outdated) LaTeX distribution says: Unknown edition (2020) of Mathematics Subject Classification; using '1991'

\maketitle

\section*{Introduction}

The theory of locally compact quantum groups is inextricably connected to the theory of operator algebras. In fact, paraphrasing S.L.~Woronowicz \cite[Section 0]{psudosp}, any theorem on locally compact quantum groups is one on \cst-algebras. In the present paper we will focus on some of the interplay between the theory of compact quantum groups and operator algebras. Examples of such an interplay are motivated by results such as the well known equivalence between amenability of a discrete group $\Gamma$ and nuclearity of the \cst-algebra $\cst_\text{\tiny{r}\!}(\Gamma)$ (\cite[Theorem 4.2]{lance}). This particular fact has been generalized to compact quantum groups (i.e.~duals of discrete quantum groups, see \cite[Section 3]{pw-qlor}) of Kac type by Tomatsu in \cite[Corollary 1.2]{tomatsuAmen} and in a weakened form to all compact quantum groups in \cite[Theorem 3.9]{tomatsuAmen} (see also \cite[Theorem 3.3]{BeTu-lcqgAmen}). Some of these topics were pursued further e.g.~in \cite{boca,dlrz} in the language of quantum group actions as well as in \cite{boca,SoVi,CraNeu} in the locally compact case.

The moral of the above-mentioned research activity is that one can learn about certain ``group-theoretical'' properties of a compact quantum group $\bG$ by studying purely operator theoretic properties of the \cst-algebra $\C(\bG)$. Furthermore one can often show that certain \cst-algebras do not admit a compact quantum group structure solely on the basis of some of their properties as \cst-algebras. Examples of such results are given in \cite{sol-wo,sol-when} and also \cite{sol-qsemi} and most recently \cite{KrajSoDisk}. In this last paper the second and third author show that the \cst-algebra known as the \emph{Toeplitz algebra} (the \cst-algebra generated by an isometry) does not admit a structure of a compact quantum group. The main tools are built out of certain direct integral decompositions available for so called \emph{type-I} quantum groups, i.e.~locally compact quantum groups whose universal quantum group \cst-algebra is of type I with particular emphasis on \cst-algebras of type I with \emph{discrete CCR ideal} (see \Cref{se:prel}).

In the present paper the techniques of \cite{KrajSoDisk} are vastly generalized and applied to a number of problems. Moreover the direct integral decompositions of representations (and other objects) are avoided. In the preliminary \Cref{se:prel} we introduce our basic tools, recall certain objects such as the CCR ideal of a \cst-algebra and prove a number of lemmas concerning implementation of automorphisms on \cst-algebras with discrete CCR ideals. The main result of \Cref{se:adm} is \Cref{th:autocoam} which says that a compact quantum group $\bG$ with $\C(\bG)$ of type I must be co-amenable (\cite[Section 1]{bmt-coam}). Along the way we prove a number of results about the scaling group of a compact quantum group which allow to reprove the result of Daws (\cite{daws-bohr}) about automatic admissibility of finite dimensional representations of any discrete quantum group (cf.~\cite[Section 2.2]{sol-bohr}).

In the final \Cref{se:cpctext} we discuss compact quantum group structures on \cst-algebras which are extensions of a finite direct sum of algebras of compact operators by a commutative \cst-algebra. Examples of such \cst-algebras occur quite frequently in non-commutative geometry and include the Podle\'s spheres (\cite{pod-sph,dab-gar}), the quantum real projective plane (\cite[Section 3.2]{JeoSzy-proj} and some weighted quantum projective space (\cite[Section 3]{BrzSzy}) and many others (see \Cref{se:cpctext}). We show that such \cst-algebras do not admit any compact quantum group structure which answers several questions left open in \cite{sol-wo,sol-when} and provides a number of fresh examples of naturally occurring quantum spaces with this property.

Our exposition is based on a number of standard references. Thus we refer to classic texts such as \cite{dix-cast,arv-inv} for all necessary background on \cst-algebras and to \cite{pseudogr,nt-bk} for the theory of compact quantum groups. We have tried to keep the terminology and notation consistent with recent trends and as self-explanatory as possible. In particular, for a compact quantum group $\bG$ we denote by $\C(\bG)$ the (usually non-commutative) \cst-algebra playing the role of the algebra of continuous functions on $\bG$. The symbol $\Irr(\bG)$ will denote the set of equivalence classes of irreducible representations of $\bG$ and for a class $\alpha\in\Irr(\bG)$ the dimension of $\alpha$ will be denoted by $n_\alpha$. Since in the theory of compact quantum groups we allow the \cst-algebras $\C(\bG)$ to be sitting strictly between the reduced and universal versions (see \cite{bmt-coam}), we will write $\Cr(\bG)$ and $\Cu(\bG)$ for these two distinguished completions of the canonical Hopf $*$-algebra $\Pol(\bG)$ inside $\C(\bG)$.

\subsection*{Acknowledgments}

AC is grateful for funding through NSF grants DMS-1801011 and DMS-2001128.

The second and third authors were partially supported by the Polish National Agency for the Academic Exchange, Polonium grant PPN/BIL/2018/1/00197 as well as by the FWO–PAS project VS02619N: von Neumann algebras arising from quantum symmetries.

\section{Preliminaries}\label{se:prel}

All \cst-algebras are unital except when we specify otherwise, or with obvious exceptions such as the algebra of compact operators on an infinite-dimensional Hilbert space.

We denote by $\widehat{\cA}$ the \emph{spectrum} of the \cst-algebra $\cA$, i.e.~the set of equivalence classes of irreducible non-zero representations (\cite[\S 2.2.1 \& \S 2.3.2]{dix-cast}).

For a Hilbert space $H$ we denote by $\cK(H)$ the algebra of compact operators on $H$. Furthermore for a family $\cH=\{H_{\lambda}\}_{\lambda\in \Lambda}$ of Hilbert spaces we set
\begin{equation}\label{eq:ckh}
  \bK(\cH):=
  \operatorname{c_0}\text{-}\bigoplus_{\lambda\in\Lambda}\cK(H_{\lambda})
\end{equation}
the algebra of compact operators in
\begin{equation*}
  H=\bigoplus_{\lambda\in \Lambda} H_{\lambda}
\end{equation*}
preserving that direct sum decomposition. In general, for non-unital \cst-algebras $\cA$, we write $\cA^+$ for the minimal unitization of $\cA$.

\subsection{Type-I \cst-algebras}\label{subse:type1}

We will work with \emph{type-I} \cst-algebras in the sense of \cite{glm}, which provides numerous equivalent characterizations. Textbook sources are \cite[Chapter 9]{dix-cast} and \cite[\S 1.5 and Chapters 2 and 4]{arv-inv}.

Recall e.g.~from \cite[discussion preceding Definition 1.5.3]{arv-inv} the following definition:

\begin{definition}\label[definition]{def:ccr} % without ``[definition]'' \Cref gets confused
  For a \cst-algebra $\cA$, the \emph{CCR ideal} $\CCR(\cA)$ is the intersection, over all irreducible representations
  \begin{equation*}
    \pi:\cA\longrightarrow \cB(H),
  \end{equation*}
  of the pre-images $\pi^{-1}(\cK(H))$ of the ideal $\cK(H)$ of compact operators on $H$.
\end{definition}

In other words, $\CCR(\cA)$ consists of those elements that are compact in every irreducible representation.

\begin{definition}\label[definition]{def:discccr} % without ``[definition]'' \Cref gets confused
  A (typically type-I) \cst-algebra $\cA$ is said to have \emph{discrete CCR ideal} if its CCR ideal $\CCR(\cA)$ is of the form $\bK(\cH)$ as in \Cref{eq:ckh} for some family $\cH=\{H_{\lambda}\}_{\lambda\in\Lambda}$ of Hilbert spaces.

  We occasionally also say $\cA$ is \emph{discrete-CCR} or \emph{CCR-discrete} for brevity, though note that this does \emph{not} mean it is CCR!
\end{definition}

Now let $\cA$ be a type-I discrete-CCR \cst-algebra, with
\begin{equation*}
  \CCR(\cA) = \bK(\cH),\quad \cH=\{H_{\lambda}\}_{\lambda\in\Lambda}
\end{equation*}
and set
\begin{equation}\label{eq:hdec}
  H:=\bigoplus_{\lambda\in\Lambda}H_{\lambda}.
\end{equation}

The ideal $\bK(\cH)\subset \cA$ is represented in the obvious fashion on $H$ with each component $\cK(H_{\lambda})$ acting naturally on $H_{\lambda}$. This representation $\rho_0:\bK(\cH)\to\cB(H)$ extends uniquely to a representation
\begin{equation*}
  \rho:\cA\longrightarrow \cB(H)
\end{equation*}
by e.g.~\cite[Theorem 1.3.4]{arv-inv}.

\begin{lemma}\label{le:autoinnergen}
  Let $\cA$ be a type-I \cst-algebra with $\CCR(\cA)$ of the form \Cref{eq:ckh}. Representation $\rho$ is faithful and every automorphism of $\cA$ is given by conjugation by some unitary $U\in\operatorname{U}(H)$.

  Furthermore, if the family $\cH$ is a singleton then that unitary is unique up to scaling by $\bT^1$.
\end{lemma}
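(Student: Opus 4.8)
The plan is to handle the three assertions in turn, the crux being that $\bK(\cH)=\CCR(\cA)$ is an \emph{essential} ideal of $\cA$; once this is known, faithfulness of $\rho$ is immediate and the remaining statements follow by a standard ``extend from the ideal'' argument. To prove essentiality I would examine the annihilator $\cJ:=\{a\in\cA: a\bK(\cH)=0\}$, a closed two-sided $*$-ideal and hence itself type~I. Were $\cJ$ nonzero it would carry a nonzero CCR ideal $\CCR(\cJ)$ (a defining feature of type-I algebras). Since every irreducible representation of $\cA$ either annihilates $\cJ$ or restricts to an irreducible representation of $\cJ$, any element of $\CCR(\cJ)$ is compact in every irreducible representation of $\cA$, i.e.\ lies in $\CCR(\cA)=\bK(\cH)$; but it also lies in $\cJ=\bK(\cH)^{\perp}$, and $\bK(\cH)\cap\bK(\cH)^{\perp}=0$ (if $x$ is in both then $x^{*}\in\bK(\cH)$ gives $xx^{*}=0$), forcing $\CCR(\cJ)=0$, a contradiction. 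Thus $\cJ=0$, so $\bK(\cH)$ is essential; any $a\in\ker\rho$ then satisfies $a\bK(\cH)\subseteq\ker\rho\cap\bK(\cH)=0$ (as $\rho|_{\bK(\cH)}=\rho_0$ is faithful), whence $a=0$. This proves $\rho$ faithful, and I henceforth identify $\cA$ with $\rho(\cA)\subseteq\cB(H)$.

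For an automorphism $\theta$ of $\cA$, intrinsicality of the CCR ideal gives $\theta(\bK(\cH))=\bK(\cH)$, so $\theta$ restricts to an automorphism of $\operatorname{c_0}\text{-}\bigoplus_{\lambda}\cK(H_\lambda)$. Such an automorphism permutes the minimal nonzero closed ideals $\cK(H_\lambda)$ via a bijection $\sigma$ of $\Lambda$ with $\dim H_\lambda=\dim H_{\sigma(\lambda)}$, and each induced isomorphism $\cK(H_\lambda)\to\cK(H_{\sigma(\lambda)})$ is spatially implemented by a unitary $u_\lambda\colon H_\lambda\to H_{\sigma(\lambda)}$. Assembling these into $U\in\operatorname{U}(H)$ with $U|_{H_\lambda}=u_\lambda$ yields $\rho_0(\theta(k))=U\rho_0(k)U^{*}$ for all $k\in\bK(\cH)$. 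Now the representations $\rho\circ\theta$ and $\operatorname{Ad}_U\circ\rho$ of $\cA$ agree on the ideal $\bK(\cH)$, where they both restrict to the nondegenerate representation $k\mapsto U\rho_0(k)U^{*}$; by uniqueness of extension of a nondegenerate ideal representation (the very \cite[Theorem 1.3.4]{arv-inv} used to construct $\rho$) they coincide on all of $\cA$. Faithfulness of $\rho$ reads this as $\theta=\operatorname{Ad}_U$, i.e.\ $\theta$ is conjugation by $U$.

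Finally, suppose $\cH=\{H\}$ is a singleton, so that $\bK(\cH)=\cK(H)\subseteq\rho(\cA)$ acts irreducibly on $H$ and consequently $\rho(\cA)'\subseteq\cK(H)'=\bC\I$. If $U,V\in\operatorname{U}(H)$ both implement $\theta$, then $V^{*}U$ commutes with every $\rho(a)$, hence $V^{*}U\in\bC\I$; being unitary it equals $\zeta\I$ for some $\zeta\in\bT^1$, so $U=\zeta V$, which is the asserted uniqueness up to $\bT^1$.

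The step I expect to be the main obstacle is the essentiality of $\CCR(\cA)$, since this is precisely what bridges the intrinsic ideal $\bK(\cH)$ and the whole algebra and underlies both faithfulness and the extension argument; the care here lies in correctly using the type-I hypothesis on the annihilator $\cJ$. Once faithfulness is secured, the inner-ness of automorphisms and the singleton uniqueness are routine consequences of the spatial theory of compact operators together with the extension-uniqueness principle, the only bookkeeping being the permutation $\sigma$ of the summands and the dimension matching needed to assemble the global unitary $U$.
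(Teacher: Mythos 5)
Your proof is correct and follows essentially the same route as the paper's: essentiality of $\CCR(\cA)$ (via type-I-ness of ideals and non-vanishing of CCR ideals) yields faithfulness of $\rho$, spatial implementation of the automorphism on $\bK(\cH)$ combined with uniqueness of the Arveson extension yields inner-ness, and irreducibility of $\rho$ in the singleton case yields uniqueness up to $\bT^1$. The differences are only cosmetic: you run the essentiality argument through the annihilator ideal and track the permutation of summands explicitly, where the paper phrases the same steps as unitary equivalences between the representations $\rho^\lambda\circ\alpha$ and $\rho^\lambda$.
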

\begin{proof}
  Recall from \cite[pp.~14--15]{arv-inv} that the representation $\rho$ is constructed from $\rho_0:\operatorname{CCR}(\mathcal{A})=\mathbb{K}(\mathcal{H})\to\cB(H)$ as follows: an element $a\in\mathcal{A}$ is mapped to the unique element $\rho(a)$ of $\cB(H)$ such that $\rho(a)\rho_0(x)=\rho_0(ax)$ for all $x\in\operatorname{CCR}(\mathcal{A})$. By construction $\rho=\bigoplus\limits_{\scriptscriptstyle\lambda\in\Lambda}\rho^\lambda$, where $\rho^\lambda$ is constructed analogously from $\rho_0^\lambda:\mathcal{K}(H_{\lambda})\to\cB(H_{\lambda})$. Since each $\rho^\lambda_0$ is irreducible, so is each $\rho^\lambda$ (\cite[Theorem 1.3.4]{arv-inv}).

  We now note that the CCR-ideal $\CCR(\mathcal{A})$ is essential. Indeed, $\CCR(\cA)$ is the largest CCR ideal in $\cA$ (cf.~\cite[p.~24]{arv-inv}). But every ideal in $\cA$ is type I and every type-I \cst-algebra contains a non-zero CCR ideal, so any non-zero ideal of $\cA$ must have a non-zero intersection with $\CCR(\cA)$. It follows that $\rho$ is faithful.

  Now for any $\alpha\in\operatorname{Aut}(\mathcal{A})$ the representation $\rho_0^\lambda\circ\alpha$ is equivalent to $\rho_0^\lambda$, so by \cite[Thm. 1.3.4]{arv-inv} $\rho^\lambda$ is equivalent to $\rho^\lambda\circ\alpha$ (because for $a\in\mathcal{A}$ and $x\in\CCR(\mathcal{A})$ we have $(\rho^\lambda\circ\alpha)(a)(\rho^\lambda_0\circ\alpha)(x)=(\rho^\lambda_0\circ\alpha)(ax)$). For each $\lambda$ let $U_{\lambda}$ be a unitary implementing the equivalence. Then $U=\bigoplus\limits_{\scriptscriptstyle\lambda\in\Lambda}U_{\lambda}$ implements equivalence between $\rho\circ\alpha$ and $\rho$.

  As for uniqueness, it follows from the fact that when $\cH=\{H_{\lambda}\}_{\lambda\in\Lambda}$ is a singleton the representation $\rho$ is irreducible, and hence the only self-intertwiners of $\rho$ are the scalars.
\end{proof}

Of more interest to us, however, will be one-parameter automorphism groups (where we can also recover some measure of uniqueness):

\begin{lemma}\label{le:1paraminnergen}
  Let $\cA$ be a type-I \cst-algebra with $\CCR(\cA)$ of the form \Cref{eq:ckh}. A one-parameter automorphism group $(\alpha_s)_{s\in\bR}$ of $\cA$ is given by conjugation by a one-parameter unitary group
  \begin{equation*}
    \bR\ni s\longmapsto U_s\in \prod_{\lambda\in\Lambda}\operatorname{U}(H_{\lambda})\subset\operatorname{U}(H),
  \end{equation*}
  preserving the decomposition \Cref{eq:hdec}, unique up to scaling by an individual character $\chi_{\lambda}:\bR\to \bT$ on each $H_{\lambda}$.
\end{lemma}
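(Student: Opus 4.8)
The plan is to reduce everything to a single block and there solve a central–extension lifting problem. First I would apply \Cref{le:autoinnergen} to each individual automorphism $\alpha_s$ separately: this yields, for every $s\in\bR$, a decomposition-preserving unitary $U_s=\bigoplus_{\lambda}U_s^{\lambda}$ with $\rho\circ\alpha_s=\operatorname{Ad}(U_s)\circ\rho$, where on each block $U_s^{\lambda}$ is determined up to a phase in $\bT$ because $\rho^{\lambda}$ is irreducible. Hence for fixed $\lambda$ the assignment $s\mapsto[U_s^{\lambda}]$ is a well-defined homomorphism $\bar U^{\lambda}\colon\bR\to\operatorname{PU}(H_{\lambda})=\operatorname{U}(H_{\lambda})/\bT$. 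I would extract its continuity from the ideal $\bK(\cH)$: since $\rho^{\lambda}$ restricts on $\CCR(\cA)=\bK(\cH)$ to the identity representation $\rho_0^\lambda$ of $\cK(H_{\lambda})$, for any $a$ with $\rho^{\lambda}(a)\in\cK(H_{\lambda})$ the norm continuity of $s\mapsto\alpha_s(a)$ forces point–norm continuity of $s\mapsto\operatorname{Ad}(U_s^{\lambda})$ on $\cK(H_{\lambda})$, which is exactly continuity of $\bar U^{\lambda}$ into $\operatorname{PU}(H_{\lambda})$.

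Second, for each block I would lift the continuous projective one-parameter group $\bar U^{\lambda}$ through the central extension $\bT\to\operatorname{U}(H_{\lambda})\to\operatorname{PU}(H_{\lambda})$ to an honest strongly continuous one-parameter unitary group $s\mapsto U_s^{\lambda}$. This is possible because $\bR$ admits no nontrivial continuous multipliers: every continuous $\bT$-valued $2$-cocycle on $\bR$ is a coboundary (Bargmann's theorem, the obstruction $H^2(\bR,\bR)$ vanishing for the one-dimensional abelian Lie algebra). Concretely one picks a Borel section of the quotient, reads off the cocycle $c(s,t)\in\bT$ from $U_s^{\lambda}U_t^{\lambda}=c(s,t)U_{s+t}^{\lambda}$, writes $c=\partial b$, and replaces $U_s^{\lambda}$ by $\overline{b(s)}\,U_s^{\lambda}$; a measurable homomorphism into the Polish group $\operatorname{U}(H_{\lambda})$ is automatically strongly continuous. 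Setting $U_s:=\bigoplus_{\lambda}U_s^{\lambda}$ then produces a decomposition-preserving one-parameter unitary group implementing $(\alpha_s)$. I expect this lifting step, together with the cocycle/continuity bookkeeping, to be the main obstacle: naive path lifting fails since the fiber $\bT$ is not discrete, so one must genuinely invoke the vanishing of the multiplier group of $\bR$.

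Finally, for uniqueness I would compare two decomposition-preserving implementing unitary groups $(U_s)$ and $(U_s')$ blockwise. On block $\lambda$ both $\operatorname{Ad}(U_s^{\lambda})$ and $\operatorname{Ad}(U_s'^{\lambda})$ agree with $\alpha_s$ on $\rho^{\lambda}(\cA)\supseteq\cK(H_{\lambda})$, so $U_s'^{\lambda}(U_s^{\lambda})^{*}$ commutes with the irreducibly acting $\cK(H_{\lambda})$ and is therefore a scalar $\chi_{\lambda}(s)\in\bT$. The group law for both families forces $\chi_{\lambda}(s+t)=\chi_{\lambda}(s)\chi_{\lambda}(t)$, and strong continuity makes each $\chi_{\lambda}$ continuous, hence a character $\bR\to\bT$. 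This is precisely the asserted residual freedom of one character per block, which completes the argument.
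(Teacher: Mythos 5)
Your overall route coincides with the paper's: implement the automorphisms blockwise, observe that on each $H_{\lambda}$ one obtains a projective unitary representation of $\bR$, lift it to a genuine one-parameter unitary group using the triviality of multipliers of $\bR$ (the paper cites Varadarajan for precisely this; your Borel-section/cocycle/automatic-continuity bookkeeping is a correct expansion of that citation), and extract the residual freedom of one character per block from irreducibility of $\cK(H_{\lambda})$ on $H_{\lambda}$. Your uniqueness argument is fine.

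However, your first step has a genuine gap: \Cref{le:autoinnergen} does not provide a decomposition-preserving unitary, only \emph{some} $U\in\operatorname{U}(H)$, and for an isolated automorphism no block-diagonal implementing unitary need exist. An automorphism of $\cA$ may permute the summands $\cK(H_{\lambda})$ of $\CCR(\cA)$: if two blocks $H_{\lambda}$, $H_{\lambda'}$ are isomorphic, a flip automorphism swaps them, and then $\rho^{\lambda}\circ\alpha$ is \emph{not} equivalent to $\rho^{\lambda}$ (they have different kernels on the ideal $\bK(\cH)$), so no unitary of $H_{\lambda}$ implements $\alpha$ on that block. Everything downstream in your argument --- the projective homomorphism $s\mapsto[U_s^{\lambda}]$, its continuity, the lifting --- presupposes that each $\alpha_s$ restricts to an automorphism of each $\cK(H_{\lambda})$, and this is exactly where the one-parameter structure must be invoked. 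It is the paper's opening sentence: each automorphism permutes the (mutually orthogonal) summands, and since $\|\alpha_s(x)-x\|\ge\|x\|$ whenever $x\in\cK(H_{\lambda})$ is mapped into a different summand, point--norm continuity forces the permutation to be trivial for $s$ near $0$, hence, the set of such $s$ being a subgroup of $\bR$, for all $s$. Once you insert this observation your proof is correct and matches the paper's, with more detail on the cocycle step than the paper chooses to give.
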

\begin{proof}
  Every automorphism of $\cA$ will permute the summands $\cK(H_{\lambda})$ of $\bK(\cH)$, so a one-parameter group will preserve each summand by continuity. But this means that on each $H_{\lambda}$ the automorphisms $(\alpha_s)_{s\in\bR}$ are given by conjugation by a \emph{projective} unitary representation (\cite[Chapter VII, Section 2]{var}) of $\bR$ on $H_{\lambda}$. Since projective representations of $\bR$ lift to plain unitary representations, for each $s$ we have
  \begin{equation*}
    \bigl.\alpha_s\bigr|_{\cB(H_{\lambda})} = \text{conjugation by }b^{is}
  \end{equation*}
  for a possibly-unbounded positive self-adjoint non-singular operator $b$ on $H_{\lambda}$. This lift is moreover unique up to multiplication by a character $\bR\to\operatorname{U}(H_{\lambda})$ because $\cK(H_{\lambda})$ acts irreducibly on $H_{\lambda}$.
\end{proof}

\subsection{Scaling groups}\label{subse:scale}

We recall the following well-known observation.

\begin{lemma}\label{le:scalepres}
  Let $\bG$ and $\bH$ be compact quantum groups. Any Hopf $*$-homomorphism
  \begin{equation*}
    \phi:\C(\bG)\longrightarrow \C(\bH)
  \end{equation*}
(i.e.~a unital $*$-homomorphism satisfying $\Delta_{\bH}\circ\phi=(\phi\otimes \phi)\circ \Delta_{\bG}$) intertwines scaling groups, in the sense that
  \begin{equation*}
  \phi\circ\tau^{\bG}_{s}(a)=\tau_{s}^{\bH}\circ \phi(a),
  \quad\forall \:s\in\bR,\,a\in\Pol(\bG).
  \end{equation*}

\end{lemma}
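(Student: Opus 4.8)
The plan is to reduce everything to the \emph{Woronowicz characters}, the distinguished family $(f^{\bG}_z)_{z\in\bC}$ of characters on $\Pol(\bG)$ (entire in $z$, with $f^{\bG}_0=\epsilon_{\bG}$, satisfying $f^{\bG}_z*f^{\bG}_w=f^{\bG}_{z+w}$ and being compatible with $S_{\bG}$ and the $*$-operation), through which the scaling group is expressed by
\[
  \tau^{\bG}_s=(f^{\bG}_{is}\otimes\mathrm{id}\otimes f^{\bG}_{-is})\circ\Delta^{(2)}_{\bG},
  \qquad \Delta^{(2)}_{\bG}=(\Delta_{\bG}\otimes\mathrm{id})\circ\Delta_{\bG}
\]
(up to the normalization conventions of \cite{nt-bk}, and similarly for $\bH$). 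The key structural fact I would invoke is the \emph{uniqueness} of this family: it is the only family of functionals on $\Pol(\bG)$ with those properties. Granting this, once I show that $\phi$ intertwines the two families of characters, the lemma reduces to a short slicing computation.

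First I would check that $\phi$ restricts to a Hopf $*$-algebra morphism $\Pol(\bG)\to\Pol(\bH)$. Since $\phi$ intertwines the comultiplications, it carries any finite-dimensional unitary corepresentation $u$ of $\bG$ to a unitary corepresentation $(\mathrm{id}\otimes\phi)(u)$ of $\bH$, hence maps matrix coefficients to matrix coefficients, so $\phi(\Pol(\bG))\subseteq\Pol(\bH)$. Evaluating the counit and the antipode of $\bH$ on $(\mathrm{id}\otimes\phi)(u)$ and comparing with $u$ then yields $\epsilon_{\bH}\circ\phi=\epsilon_{\bG}$ and $\phi\circ S_{\bG}=S_{\bH}\circ\phi$ on matrix coefficients, hence on all of $\Pol(\bG)$; the point to be careful about is that I argue on the generating matrix coefficients rather than trying to ``cancel'' the possibly non-injective map $\phi$.

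Next I would verify that $(f^{\bH}_z\circ\phi)_{z\in\bC}$ satisfies the defining properties of $(f^{\bG}_z)_{z\in\bC}$: it is entire in $z$ and multiplicative because $\phi$ and each $f^{\bH}_z$ are; it sends $z=0$ to $\epsilon_{\bH}\circ\phi=\epsilon_{\bG}$; and the convolution identity follows from $\Delta_{\bH}\circ\phi=(\phi\otimes\phi)\circ\Delta_{\bG}$ via $(f^{\bH}_z\circ\phi)*(f^{\bH}_w\circ\phi)=(f^{\bH}_z\otimes f^{\bH}_w)\circ\Delta_{\bH}\circ\phi=f^{\bH}_{z+w}\circ\phi$. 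The compatibilities with $S$ and with $*$ use the intertwining relations from the previous paragraph. By uniqueness I then conclude $f^{\bH}_z\circ\phi=f^{\bG}_z$ for every $z\in\bC$.

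Finally I would combine these. Iterating $\Delta_{\bH}\circ\phi=(\phi\otimes\phi)\circ\Delta_{\bG}$ gives $\Delta^{(2)}_{\bH}\circ\phi=(\phi\otimes\phi\otimes\phi)\circ\Delta^{(2)}_{\bG}$, so applying the formula for $\tau^{\bH}_s$ and substituting $f^{\bH}_{\pm is}\circ\phi=f^{\bG}_{\pm is}$ on the outer legs turns $\tau^{\bH}_s(\phi(a))$ into $(f^{\bG}_{is}\otimes\phi\otimes f^{\bG}_{-is})\Delta^{(2)}_{\bG}(a)=\phi\bigl(\tau^{\bG}_s(a)\bigr)$, which is exactly the claim. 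The only genuine input is the uniqueness of the Woronowicz characters; the main obstacle is therefore the bookkeeping, namely making sure the $S$- and $*$-compatibilities are established at the level of matrix coefficients rather than by assuming injectivity of $\phi$, after which everything is a routine application of leg-numbering and the intertwining hypothesis.
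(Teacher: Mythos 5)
Your proof hinges on the intermediate claim $f^{\bH}_z\circ\phi=f^{\bG}_z$, and that claim is false in general; with it, the whole argument collapses. Concretely: take $\C(\bG)=\Cu(\bG)$ for any non-Kac $\bG$ (say $\operatorname{SU}_q(2)$, $0<q<1$), let $\bH$ be the trivial group with $\C(\bH)=\bC$, and let $\phi=\epsilon$ be the universal counit, which is a Hopf $*$-homomorphism. Then $f^{\bH}_z\circ\phi=\epsilon_{\bG}$ for every $z$, while $f^{\bG}_z\neq\epsilon_{\bG}$ precisely because $\bG$ is not of Kac type; the same failure occurs for the restriction map $\Pol(\operatorname{SU}_q(2))\to\Pol(\bT)$ onto the diagonal torus. (Note that the conclusion of the lemma does hold in these examples --- that is exactly \Cref{pr:findiminv} --- so intertwining of scaling groups genuinely does not force intertwining of Woronowicz characters.) The reason your uniqueness argument does not detect this is that the properties you verify --- entireness, multiplicativity, $f_0=\epsilon$, the convolution group law, and the $S$- and $*$-compatibilities --- do \emph{not} characterize the Woronowicz family. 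Already on $\Pol(\bT)=\bC[t,t^{-1}]$ the family $f_z(t^n)=e^{n\lambda z}$ satisfies all of them for every fixed $\lambda\in\bR$, yet the Woronowicz characters of the Kac group $\bT$ are trivial. Woronowicz's uniqueness theorem \cite[Theorem 5.6]{pseudogr} requires in addition the modular relation with the Haar state, $h(ab)=h\bigl(b(f_1*a*f_1)\bigr)$ (equivalently, in the conventions of \cite{nt-bk}, the normalization $\operatorname{Tr}(\uprho_\alpha)=\operatorname{Tr}(\uprho_\alpha^{-1})$ of the canonical intertwiners), and this is exactly the condition you cannot transport along $\phi$, because Hopf $*$-homomorphisms do not intertwine Haar states.

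The first part of your argument (that $\phi$ restricts to a Hopf $*$-algebra morphism $\Pol(\bG)\to\Pol(\bH)$ intertwining counits and antipodes) is correct, and is also the first step of the paper's proof; the paper then sidesteps the character issue entirely by lifting $\phi|_{\Pol(\bG)}$ to the universal completions and quoting \cite[Proposition 3.10]{mrw}. If you want a self-contained repair in your corepresentation-theoretic style, the right statement to prove is one that is insensitive to normalization: for \emph{any} unitary corepresentation $U$ of $\bH$ and \emph{any} positive invertible intertwiner $Q$ from $U$ to its double contragredient, one has $(\mathrm{id}\otimes\tau^{\bH}_s)(U)=(Q^{is}\otimes\I)U(Q^{-is}\otimes\I)$; the ambiguity in $Q$ (positive multiplicity operators and blockwise scalars in the isotypic decomposition) cancels under conjugation. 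Applying $\phi$ to the coefficient identities shows that the matrix $\uprho_\alpha$ is such an intertwiner for $(\mathrm{id}\otimes\phi)(U^{\alpha})$ --- not the normalized one, and that is the point --- so conjugation by $\uprho_\alpha^{is}$ computes $\tau^{\bH}_s$ on the coefficients of the image corepresentation, which is exactly the intertwining relation on $\Pol(\bG)$.
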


\begin{proof}
Assume first that $\C(\bG)=\Cu(\bG),\C(\bH)=\Cu(\bH)$ are universal versions of the algebras of continuous functions. In this case our lemma is simply a reformulation of \cite[Proposition 3.10, equation (20)]{mrw}.\\
Consider now the general case. Observe that $\phi$ restricts to a map $\Pol(\bG)\rightarrow \Pol(\bH)$, hence by the universal property of $\Cu(\bG)$ we can extend $\phi|_{\Pol(\bG)}$ to a $*$-homomorphism $\tilde{\phi}\colon\Cu(\bG)\rightarrow\Cu(\bH)$.  Clearly $\tilde{\phi}$ is a Hopf $*$-homomorphism, hence by the above argument $\tilde{\phi}$ intertwines scaling groups. As $\phi$ and $\tilde{\phi}$ are equal on $\Pol(\bG)$ and the canonical morphisms $\Cu(\bG)\rightarrow\C(\bG),\:\Cu(\bH)\rightarrow\C(\bH)$ intertwine scaling groups, we arrive at the claim.
\end{proof}

\section{Admissibility and co-amenability}\label{se:adm}

Throughout the discussion we denote by $\bG$ a compact quantum group and by $\Gamma=\widehat{\bG}$ its discrete quantum dual. The following observation will be put to use repeatedly; it is \cite[lemma 2.3]{cs}, and it follows from \Cref{le:scalepres} upon noting that finite-dimensional representations factor through Kac quotients. 

\begin{proposition}\label[proposition]{pr:findiminv} % without ``[proposition]'' \Cref gets confused
  Every finite-dimensional representation $\rho:\cA\to M_n$ of the CQG algebra $\cA=\C(\bG)$ is invariant under the scaling group $(\tau_s)_{s\in\bR}$ of $\bG$, in the sense that
  \begin{equation*}
    \rho \circ\tau_s(a) = \rho(a),\quad \forall\:s\in \bR,\,a\in\Pol(\bG).
  \end{equation*}
  \qedhere
\end{proposition}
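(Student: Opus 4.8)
The plan is to factor $\rho$ through a finite-dimensional---hence Kac-type---quotient of $\bG$ and then transport the triviality of the scaling group on a Kac-type compact quantum group back to $\bG$ via \Cref{le:scalepres}.

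First I would feed the $*$-representation $\rho\colon\C(\bG)\to M_n$ into the Hopf image construction of Banica and Bichon. This produces a factorization
\[
\C(\bG)\xrightarrow{\ q\ }\C(\bH)\xrightarrow{\ j\ }M_n,
\]
in which $q$ is a surjective Hopf $*$-homomorphism onto the algebra of a compact quantum group $\bH$ and $j$ is an injective $*$-homomorphism. Since $j$ embeds $\C(\bH)$ as a $C^*$-subalgebra of $M_n$, the algebra $\C(\bH)$ is finite-dimensional, so $\bH$ is a finite quantum group with $\C(\bH)=\Pol(\bH)$.

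The crux is to see that $\bH$ is of Kac type. Being a finite-dimensional $C^*$-algebra, $\C(\bH)$ is semisimple, and a finite-dimensional semisimple Hopf algebra satisfies $S^2=\mathrm{id}$ by the Larson--Radford theorem; equivalently, the $F$-matrices of $\bH$ are all trivial and its Haar state is a trace. Hence the scaling group of $\bH$ is trivial, i.e.\ $\tau^{\bH}_s=\mathrm{id}$ for every $s\in\bR$.

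With this in hand the conclusion is immediate: as $q$ is a Hopf $*$-homomorphism, \Cref{le:scalepres} yields $q\circ\tau^{\bG}_s=\tau^{\bH}_s\circ q=q$ on $\Pol(\bG)$, and therefore
\[
\rho\circ\tau^{\bG}_s=j\circ q\circ\tau^{\bG}_s=j\circ q=\rho,\qquad s\in\bR.
\]
I expect the middle step to be the main obstacle: one must use finite-dimensionality (through semisimplicity) to force the quotient to be Kac. This is also what breaks the apparent circularity one runs into if one instead tries to factor $\rho$ directly through the maximal Kac quotient of $\bG$, since membership in the defining ideal of that quotient is essentially the statement to be proved.
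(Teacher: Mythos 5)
The factorization step is where your argument breaks: the Hopf image construction does not produce an injective $j$. Given $\rho\colon\C(\bG)\to M_n$, the Hopf image is the quotient $q\colon\C(\bG)\to\C(\bH)$ by the \emph{largest Hopf $*$-ideal contained in $\ker\rho$}, and the induced map $j\colon\C(\bH)\to M_n$ is only \emph{inner faithful} (its kernel contains no non-zero Hopf $*$-ideal); it is injective precisely when $\ker\rho$ happens to be a Hopf $*$-ideal itself, which is rare. Concretely, take $\bG=\bT$, so $\Pol(\bG)=\bC[\bZ]$, and let $\rho=\mathrm{ev}_z\colon\C(\bT)\to\bC=M_1$ be evaluation at a point $z$ of infinite order. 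Hopf quotients of $\C(\bT)$ correspond to closed subgroups $H\subseteq\bT$, and $\mathrm{ev}_z$ factors through $\C(H)$ only if $z\in H$; since $z$ generates a dense subgroup, the Hopf image of $\rho$ is all of $\C(\bT)$ --- infinite-dimensional --- and $j=\mathrm{ev}_z$ is very far from injective. So your claims that $\C(\bH)$ is finite-dimensional and that $\bH$ is a finite quantum group are false, and no repair along these lines is possible: the example shows a finite-dimensional representation need not factor through \emph{any} finite-dimensional Hopf quotient, so semisimplicity and Larson--Radford never get to act. (The conclusion of the proposition of course still holds here, $\bT$ being of Kac type; it is only your route to it that collapses.)

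The correct intermediate object is the \emph{canonical Kac quotient} (\cite[Appendix]{sol-bohr}), and using it is not circular, contrary to your closing worry, because it is defined via tracial states rather than via scaling invariance: $\cA_{\cat{kac}}=\cA/J$, where $J=\{a\in\cA : \tau(a^*a)=0 \text{ for every tracial state } \tau \text{ on } \cA\}$, and So{\l}tan shows this quotient is a compact quantum group of Kac type. Since $\rho$ is a unital $*$-homomorphism into $M_n$, the state $\mathrm{tr}\circ\rho$ is tracial (multiplicativity of $\rho$ plus traciality of $\mathrm{tr}$), and faithfulness of $\mathrm{tr}$ on $M_n$ gives $J\subseteq\ker\rho$; hence $\rho$ factors through $\cA_{\cat{kac}}$, whose scaling group is trivial, and \Cref{le:scalepres} finishes exactly as in your last display. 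This is the mechanism behind \cite[Lemma 2.3]{cs}, which is what the paper invokes. The scaling-group description of the Kac ideal that you feared would make this circular is a separate statement, proved independently later as \Cref{pr:kac}. In short: your first and last steps (finite quantum groups are Kac; transport via \Cref{le:scalepres}) are sound, but the middle step must be weakened from ``factor through a finite quantum group'' to ``factor through a Kac-type quotient''.
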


% % \begin{proof}
% % The representation $\rho$ factors as
% % %   \begin{equation}\label{eq:rhofact}
% % %     \begin{tikzpicture}[auto,baseline=(current  bounding  box.center)]
% % %       \path[anchor=base]
% % %       (0,0) node (l) {$\cA$}
% % %       +(2,.5) node (u) {$\cA_{\cat{kac}}$}
% % %       +(4,0) node (r) {$M_n$}
% % %       ;
% % %       \draw[->] (l) to[bend left=6] node[pos=.5,auto] {$\scriptstyle \pi$} (u);
% % %       \draw[->] (u) to[bend left=6] node[pos=.5,auto] {$\scriptstyle \overline{\rho}$} (r);
% % %       \draw[->] (l) to[bend right=6] node[pos=.5,auto,swap] {$\scriptstyle \rho$} (r);
% % %     \end{tikzpicture}
% % %   \end{equation}
% %   \begin{equation}\label{eq:rhofact}
% %     \xymatrix@R0pt@C50pt{
% %     &\cA_{\cat{kac}}\ar@/^/[dr]^{\overline{\rho}}\\
% %     \cA\ar@/^/[ur]^\pi\ar@/_13pt/[rr]_\rho&&M_n
% %     }
% %   \end{equation}
% %   through the canonical Kac quotient $\cA_{\cat{kac}}$ (see \cite[discussion following Proposition A.1]{sol-bohr}).
% % 
% %   Since $\pi$ in the above diagram is a Hopf $*$-homomorphism,  \Cref{le:scalepres} applies, i.e.~$\pi$ intertwines the scaling groups of the algebras in question. But $\cA_{\cat{kac}}$ is by construction of Kac type and hence has trivial scaling group. The conclusion follows from this and the factorization \Cref{eq:rhofact}.
% % \end{proof}
% %

\Cref{pr:findiminv} has a number of consequences. First, note the following generalization.

\begin{corollary}
  Let $\cB$ be a \cst-algebra all of whose irreducible representations are finite-di\-men\-sion\-al and $\cA=\C(\bG)$ for a compact quantum group $\bG$. Then, every morphism $\rho:\cA\to \cB$ is invariant under the scaling group $(\tau_s)_{s\in\bR}$ of $\bG$.
\end{corollary}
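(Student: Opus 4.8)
The plan is to deduce the corollary from \Cref{pr:findiminv} by testing scaling-invariance against the irreducible representations of $\cB$. Recall that the family of all (equivalence classes of) irreducible representations of a \cst-algebra is jointly faithful: an element $b\in\cB$ vanishes precisely when $\pi(b)=0$ for every irreducible $\pi$, this being a standard consequence of Gelfand--Naimark (pure-state GNS representations separate points of $\cB$). Hence, in order to prove that $\rho\circ\tau_s(a)=\rho(a)$ for all $a\in\Pol(\bG)$ and $s\in\bR$, it suffices to show that $\pi\bigl(\rho(\tau_s(a))\bigr)=\pi\bigl(\rho(a)\bigr)$ for each such $\pi$ (extended canonically to the multiplier algebra $M(\cB)$ should the morphism $\rho$ take values there rather than in $\cB$ itself).

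First I would fix an irreducible representation $\pi$ of $\cB$, acting on a Hilbert space $H_\pi$, which by hypothesis is finite-dimensional; note that an irreducible representation of $\cB$ extends uniquely to an irreducible representation of $M(\cB)$ on the same space, so finite-dimensionality persists and $\pi\circ\rho$ is genuinely defined on all of $\cA$. The composite $\pi\circ\rho\colon\cA\to M_{\dim H_\pi}$ is then a finite-dimensional representation of $\cA=\C(\bG)$, its image being a $*$-subalgebra of a matrix algebra irrespective of whether $\rho$ is unital. \Cref{pr:findiminv} therefore applies verbatim to $\pi\circ\rho$ and yields
\begin{equation*}
  \pi\circ\rho\circ\tau_s = \pi\circ\rho \quad\text{on }\Pol(\bG),\ \forall\,s\in\bR.
\end{equation*}

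Combining the two steps, for every irreducible $\pi$ and every $a\in\Pol(\bG)$ we obtain $\pi\bigl(\rho(\tau_s(a))\bigr)=\pi\bigl(\rho(a)\bigr)$, and joint faithfulness of the $\pi$'s then forces $\rho(\tau_s(a))=\rho(a)$, as desired. The argument requires no quantum-group input beyond \Cref{pr:findiminv}; the only points demanding any care are the reduction to irreducible representations (where one invokes the separating property of pure-state GNS representations) and the bookkeeping around a possibly non-unital $\cB$ and its multiplier algebra. Since neither is a genuine obstacle, I expect the whole proof to be quite short, essentially the sentence ``apply \Cref{pr:findiminv} to $\pi\circ\rho$ and let $\pi$ range over $\widehat{\cB}$.''
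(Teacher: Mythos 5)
Your proof is correct and is essentially identical to the paper's: both apply \Cref{pr:findiminv} to the compositions $\pi\circ\rho$ with $\pi$ ranging over the irreducible representations of $\cB$, and then conclude via the joint faithfulness of those representations. The extra bookkeeping you include about non-unital $\cB$ and multiplier algebras is harmless but not needed in the paper's formulation.
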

\begin{proof}
  Indeed, it follows from \Cref{pr:findiminv} that for every irreducible representation $\pi:\cB\to M_n$ the composition $\pi\circ \rho$ is invariant under $\tau$. The conclusion follows from the fact that the direct sum of all $\pi$ is faithful on $\cB$ (i.e.~every \cst-algebra embeds into the direct sum of its irreducible representations).
\end{proof}

Secondly, we obtain the following alternative proof of \cite[Corollary 6.6]{daws-bohr} or \cite[Proposition 3.3]{vis-mix} which concerns admissibility of finite representations of discrete quantum. The relevant terminology is explained in \cite{sol-bohr,daws-bohr,dds}.

\begin{theorem}
  Every finite-dimensional unitary representation of a discrete quantum group is admissible.
\end{theorem}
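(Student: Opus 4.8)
The plan is to convert the statement into one about $*$-representations of $\Cu(\bG)$ and then read off admissibility directly from \Cref{pr:findiminv}. By the universal duality between a discrete quantum group and its compact dual (see \cite{dds} and the references therein), a finite-dimensional unitary representation $U$ of $\Gamma=\widehat{\bG}$ is the same datum as a finite-dimensional non-degenerate $*$-representation $\rho\colon\Cu(\bG)\to M_n$, recovered by slicing the canonical bicharacter $\mathcal{W}$ as $U=(\rho\otimes\mathrm{id})\mathcal{W}$. Under this dictionary the admissibility of $U$ --- unitarizability of its contragredient, in the sense of \cite{sol-bohr,daws-bohr,dds} --- is controlled by the scaling group: $U$ is admissible exactly when there is a positive invertible $Q\in\cB(\bC^n)$ with $\rho\circ\tau_s=\mathrm{Ad}(Q^{is})\circ\rho$ for every $s\in\bR$, this $Q$ supplying the operator that unitarizes the contragredient.

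With the correspondence in hand the argument is short. I would apply \Cref{pr:findiminv} to the CQG algebra $\cA=\Cu(\bG)$ (a legitimate choice of $\C(\bG)$, lying between $\Cr(\bG)$ and $\Cu(\bG)$), obtaining $\rho\circ\tau_s=\rho$ for all $s\in\bR$. Thus the scaling group is implemented \emph{trivially} on $\rho$, and one may take $Q=\I$ in the criterion above; the contragredient is then already unitary and $U$ is admissible. Equivalently, the $\tau$-invariance says $\rho$ factors through the Kac quotient of $\bG$, on which the scaling group is trivial and finite-dimensional representations are automatically admissible.

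The substance of the proof lies entirely in the first step: pinning down the precise admissibility criterion of \cite{sol-bohr,daws-bohr,dds} and checking that scaling invariance of $\rho$ is exactly the vanishing of the obstruction to unitarizing the contragredient, while carefully tracking which completion of $\Pol(\bG)$ carries $\rho$ so that \Cref{pr:findiminv} is applicable. The duality bookkeeping aside, \Cref{pr:findiminv} then closes the argument with no further computation, which is what makes this a genuinely shorter route than the original proofs in \cite{daws-bohr,vis-mix}.
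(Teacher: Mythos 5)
Your proposal is correct and follows essentially the same route as the paper: identify the finite-dimensional unitary representation of $\Gamma$ with a morphism $\rho\colon\Cu(\bG)\to M_n$, apply \Cref{pr:findiminv} to get scaling-group invariance, and conclude admissibility from the scaling-group criterion --- the ``criterion'' you reconstruct (existence of a positive invertible $Q$ with $\rho\circ\tau_s=\mathrm{Ad}(Q^{is})\circ\rho$, with $Q=\I$ sufficing here) is precisely the content of \cite[Proposition 3.2 and Remark 3.4]{dds}, which is what the paper cites at this step. The only cosmetic difference is that you spell out that citation and add the equivalent Kac-quotient phrasing, which the paper itself uses in motivating \Cref{pr:findiminv}.
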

\begin{proof}
  Let $\bG$ be a compact quantum group and denote by $\Gamma$ the dual of $\bG$. Furthermore put $\cA:=\Cu(\bG)$. As explained in \cite[Theorem 3.4]{pw-qlor} (cf.~\cite[Proposition 5.3]{kust-univ}, \cite[Section 5]{fmutqg2}), a unitary representation of $\Gamma$ on $\bC^n$ is defined by a morphism $\rho:\cA\to M_n$.

  \Cref{pr:findiminv} ensures that $\rho$ is invariant under the scaling group of $\bG$. But then, by \cite[Proposition 3.2 and Remark 3.4]{dds}, the representation of $\Gamma$ associated to $\rho$ will be admissible.
\end{proof}

Next, we have the following sufficient criterion for the co-amenability of a compact quantum group $\bG$. It appears as \cite[Proposition 2.5]{cs}, and we include a slightly different proof here.

\begin{theorem}\label[theorem]{th:coamcrit} % without ``[theorem]'' \Cref gets confused
  A compact quantum group $\bG$ is co-amenable if and only if the reduced algebra $\Cr(\bG)$ admits a morphism $\rho:\Cr(\bG)\to M_n$ to a finite-dimensional \cst-algebra.
\end{theorem}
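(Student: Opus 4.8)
The plan is to reduce everything to the standard characterisation of co-amenability from \cite[Section 1]{bmt-coam}: $\bG$ is co-amenable exactly when the counit $\epsilon:\Pol(\bG)\to\bC$ extends to a character on the reduced algebra $\Cr(\bG)$. Granting this, the forward implication is immediate, since such a character is in particular a morphism $\Cr(\bG)\to M_1$. The content is therefore entirely in the converse, which I would recast on the dual side: writing $\Gamma=\widehat{\bG}$, co-amenability of $\bG$ is amenability of $\Gamma$, and the latter is equivalent to the trivial representation $\I$ of $\Gamma$ being weakly contained in its regular representation $\lambda$.

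For the converse I would start from a morphism $\rho:\Cr(\bG)\to M_n$ and compose it with the canonical surjection $\Cu(\bG)\to\Cr(\bG)$ to obtain a morphism $\Cu(\bG)\to M_n$, which by \cite[Theorem 3.4]{pw-qlor} (exactly as in the proof of the admissibility theorem above) is the same datum as a finite-dimensional unitary representation $U$ of $\Gamma$. I would then extract two features of $U$. First, because $\rho$ factors through the reduced algebra, $U$ is weakly contained in $\lambda$. Secondly --- and this is where the preceding theorem is used --- $U$ is admissible, hence possesses a conjugate $\overline{U}$, so that the trivial representation embeds as $\I\subset U\otimes\overline{U}$.

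These two features are then combined through Fell's absorption principle $\lambda\otimes\overline{U}\cong\lambda^{\oplus n}$. Tensoring the weak containment $U\prec\lambda$ on the right by $\overline{U}$ and invoking absorption yields
\begin{equation*}
  \I\subset U\otimes\overline{U}\prec\lambda\otimes\overline{U}\cong\lambda^{\oplus n}\prec\lambda,
\end{equation*}
whence $\I\prec\lambda$. This is precisely the amenability of $\Gamma$, equivalently the co-amenability of $\bG$, and completes the argument.

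I expect the delicate point to be the embedding $\I\subset U\otimes\overline{U}$: this is exactly what admissibility of $U$ buys us, since for a finite-dimensional representation that is \emph{not} admissible the contragredient need not pair with $U$ to produce an invariant vector. A secondary matter requiring care is to phrase weak containment and Fell absorption at the level of the reduced \cst-completions, so that each link in the displayed chain of weak containments is genuinely valid rather than merely formal.
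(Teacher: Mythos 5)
Your proof is correct, but it follows a genuinely different route from the paper's. The paper never leaves the function-algebra side: \Cref{pr:findiminv} makes $\rho$ invariant under the scaling group, analytic continuation turns this into invariance of $\rho|_{\Pol(\bG)}$ under $S^2=\tau_{-i}$, and co-amenability is then quoted from \cite[Theorem 4.4]{bmt1}. You instead pass to the dual $\Gamma$ and transport the classical argument that a discrete group admitting a finite-dimensional representation weakly contained in its regular representation is amenable (conjugates plus Fell absorption). The two proofs are less far apart than they look: the admissibility theorem you invoke is itself deduced in the paper from \Cref{pr:findiminv} (via \cite{dds}), so both arguments bottom out in scaling-group invariance of finite-dimensional representations; the difference is that the paper then uses \cite{bmt1} as a black box, while you in effect reprove the special case of that result which is needed, by representation-theoretic means. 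Your version makes the mechanism transparent and exactly parallel to the group case; its cost is that the dual-side infrastructure (unitary conjugates, absorption, weak containment) must be invoked in the correct quantum form.

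On that last point, two adjustments would make the sketch watertight. First, skip the detour through ``amenability of $\Gamma$'': the equivalence of amenability of $\Gamma$ with co-amenability of $\bG$ is Tomatsu's theorem \cite{tomatsuAmen}, considerably deeper than the statement you are proving, and it is not needed. By the kernel characterization of weak containment, $\I\prec\lambda$ says precisely that $\ker\bigl(\Lambda:\Cu(\bG)\to\Cr(\bG)\bigr)\subseteq\ker\epsilon$, i.e.\ that the counit is bounded on $\Cr(\bG)$, which is the definition of co-amenability used in the paper. Second, for quantum groups the two orderings $\lambda\otimes\overline{U}$ and $\overline{U}\otimes\lambda$ are genuinely different, and what your chain needs is only the weak containment $\lambda\otimes\overline{U}\prec\lambda$, not a unitary equivalence with $\lambda^{\oplus n}$. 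This follows at once from Kustermans' half-lifted comultiplication $\Delta_{r,u}$ satisfying $\Delta_{r,u}\circ\Lambda=(\Lambda\otimes\mathrm{id})\circ\Delta_u$ (\cite{kust-univ}, see also \cite{mrw}): the representation $(\Lambda\otimes\pi_{\overline{U}})\circ\Delta_u$ of $\Cu(\bG)$ corresponding to $\lambda\otimes\overline{U}$ then visibly factors through $\Lambda$. With these two repairs every link in your displayed chain becomes an honest kernel inclusion, and the argument is complete; it is also non-circular, since the admissibility theorem precedes \Cref{th:coamcrit} in the paper and does not rely on it.
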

\begin{proof}
  Co-amenability means the counit is bounded on $\Cr(\bG)$, so only the backwards implication `$\Leftarrow$' is interesting. We know from \Cref{pr:findiminv} that $\rho$ is invariant under the scaling group $\tau_s$, $s\in \bR$, so by analytic continuation its restriction to the dense Hopf $*$-subalgebra $\Pol(\bG)\subset\Cr(\bG)$ is invariant under the squared antipode
  \begin{equation*}
    S^2 = \tau_{-i}
  \end{equation*}
  (\cite[p.32]{nt-bk}). Once we have $S^2$-invariance, co-amenability follows from \cite[Theorem 4.4]{bmt1}.
\end{proof}

\begin{remark}
  \Cref{th:coamcrit} generalizes \cite[Theorem 2.8]{bmt-coam}, which requires the existence of a bounded \emph{character}, and strengthens \cite[Theorem 4.4]{bmt1} by removing the $S^2$-invariance hypothesis (which is automatic).
\end{remark}

For future reference, we also record the following description of the Kac quotient of a CQG algebra.

\begin{proposition}\label[proposition]{pr:kac}
Let $\cA=\C(\bG)$ and $(\tau_s)_{s\in\bR}$ the corresponding scaling group.

The Kac quotient $\cA_{\cat{kac}}$ is precisely the largest quotient of $\cA$ on which $\tau_s$ acts trivially, i.e.~the quotient by the ideal generated by the elements
\begin{equation}\label{eq:kacelems}
  \tau_s(a)-a,\ s\in \bR,\ a\in \Pol(\bG).
\end{equation}
\end{proposition}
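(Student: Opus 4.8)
The plan is to show that $\cA/\cJ$ — where $\cJ$ is the closed ideal generated by the elements \eqref{eq:kacelems} — is a quotient compact quantum group of Kac type, and that it is the \emph{largest} such quotient; since the Kac quotient $\cA_{\cat{kac}}$ is by definition the largest Kac-type quotient of $\bG$, this identifies the two. Two standard ingredients are used throughout: that each $\tau_s$ is a Hopf $*$-automorphism of $\cA$ (so $\Delta\circ\tau_s=(\tau_s\otimes\tau_s)\circ\Delta$, $\varepsilon\circ\tau_s=\varepsilon$ on $\Pol(\bG)$, and $S\circ\tau_s=\tau_s\circ S$), and that a compact quantum group is of Kac type exactly when its scaling group is trivial (equivalently $S^2=\tau_{-i}=\mathrm{id}$); the latter I would simply cite.

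First I would check that the algebraic ideal $\cJ_0\subseteq\Pol(\bG)$ generated by \eqref{eq:kacelems} is a Hopf $*$-ideal, after which the closed ideal it generates in $\cA$ is $\cJ=\overline{\cJ_0}$ and $\cA/\cJ$ completes the CQG algebra $\Pol(\bG)/\cJ_0$. On the generators, $*$-invariance is clear, $\varepsilon(\tau_s(a)-a)=0$, and $S(\tau_s(a)-a)=\tau_s(S(a))-S(a)\in\cJ_0$; for the coideal condition I expand, writing $\Delta(a)=\sum a_{(1)}\otimes a_{(2)}$ in $\Pol(\bG)\odot\Pol(\bG)$,
\begin{equation*}
  \Delta\bigl(\tau_s(a)-a\bigr)=\sum\bigl(\tau_s(a_{(1)})-a_{(1)}\bigr)\otimes\tau_s(a_{(2)})+\sum a_{(1)}\otimes\bigl(\tau_s(a_{(2)})-a_{(2)}\bigr),
\end{equation*}
which lies in $\cJ_0\otimes\Pol(\bG)+\Pol(\bG)\otimes\cJ_0$. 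Thus $\Pol(\bG)/\cJ_0$ is a CQG algebra and $\cA/\cJ$ the function algebra of a quotient compact quantum group; write $\pi\colon\cA\to\cA/\cJ$ for the quotient Hopf $*$-homomorphism.

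To see that this quotient is Kac, I would apply \Cref{le:scalepres} to $\pi$: denoting by $(\tau'_s)$ the scaling group of the quotient, $\pi\circ\tau_s=\tau'_s\circ\pi$. But $\tau_s(a)-a\in\cJ=\ker\pi$ gives $\pi\circ\tau_s=\pi$ on $\Pol(\bG)$, whence $\tau'_s\circ\pi=\pi$; since $\pi$ has dense range this forces $\tau'_s=\mathrm{id}$, so the quotient has trivial scaling group and is Kac. For maximality, let $\phi\colon\cA\to\cB$ be any Hopf $*$-homomorphism onto a Kac-type compact quantum group. By \Cref{le:scalepres} again, $\phi\circ\tau_s=\tau''_s\circ\phi$ with $(\tau''_s)$ the scaling group of $\cB$, and $\tau''_s=\mathrm{id}$ since $\cB$ is Kac; hence $\phi(\tau_s(a)-a)=0$ for all generators, so $\cJ\subseteq\ker\phi$ and $\phi$ factors through $\pi$. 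This exhibits $\cA/\cJ$ as the largest Kac-type quotient, i.e.\ $\cA_{\cat{kac}}=\cA/\cJ$.

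The Hopf-ideal verifications above are routine; the one point requiring care is the level at which the argument runs. The comultiplication, counit and antipode behave algebraically only on $\Pol(\bG)$, so the coideal/counit/antipode computations must be carried out there for the \emph{algebraic} ideal $\cJ_0$, and one must then note that passage to the closed ideal $\cJ$ in $\cA=\C(\bG)$ and to the $\mathrm{C}^*$-completion is compatible with the quotient CQG structure. The only genuinely external inputs are the equivalence between the Kac property and triviality of the scaling group, and the fact that a Hopf $*$-ideal quotient of a CQG algebra is again a CQG algebra.
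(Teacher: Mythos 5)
Your proof is correct and takes essentially the same approach as the paper's: both pass to the quotient by the ideal generated by \Cref{eq:kacelems}, observe it is again a CQG algebra because the $\tau_s$ are Hopf $*$-automorphisms, and then use \Cref{le:scalepres} twice --- once to see that this quotient has trivial scaling group and is therefore Kac, and once to see that any Kac-type quotient of $\cA$ (in the paper, $\cA_{\cat{kac}}$ itself) annihilates the generators and hence factors through it. Your explicit Hopf-$*$-ideal computations merely flesh out the step the paper asserts in one line, and your ``largest Kac quotient'' maximality argument is the paper's two-way factorization phrased via the universal property of $\cA_{\cat{kac}}$.
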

\begin{proof}
  On the one hand, since $(\tau_s)_{s\in\bR}$ is a one-parameter group of CQG automorphisms (i.e.~each $\tau_s$ preserves both the multiplication and the comultiplication), the quotient
  \begin{equation}\label{eq:atob}
    \cA\longrightarrow \cB
  \end{equation}
  by the ideal generated by \Cref{eq:kacelems} is indeed a CQG algebra. Since furthermore \Cref{eq:atob} intertwines scaling groups (\Cref{le:scalepres}) the scaling group of $\cB$ is trivial by construction and hence $\cB$ is Kac; this means that \Cref{eq:atob} factors as
  \begin{equation*}
%     \cA\to \cA_{\cat{kac}}\to \cB.
    \xymatrix{\cA\ar[r]&\cA_{\cat{kac}}\ar[r]&\cB.}
  \end{equation*}
  On the other hand, the morphism $\cA\to \cA_{\cat{kac}}$ also intertwines scaling groups. Since its codomain has trivial scaling group, it must vanish on all elements of the form \Cref{eq:kacelems} and hence factor through $\cB$. In short, the kernels of $\cA\to \cA_{\cat{kac}}$ and \Cref{eq:atob} coincide.
\end{proof}

Next, the goal will be to prove

\begin{theorem}\label[theorem]{th:autocoam} % without ``[theorem]'' \Cref gets confused
  Let $\bG$ be a compact quantum group such that $\cA=\C(\bG)$ is type-I. Then $\bG$ is co-amenable.
\end{theorem}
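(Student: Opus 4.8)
The plan is to verify the criterion furnished by \Cref{th:coamcrit}: it suffices to produce a single finite-dimensional $*$-representation of the \emph{reduced} algebra $\Cr(\bG)$. The first observation is that $\Cr(\bG)$ is itself type-I. Indeed, the canonical reducing morphism $\C(\bG)\to\Cr(\bG)$ (the GNS representation attached to the Haar state) is surjective, so $\Cr(\bG)$ is a quotient of $\C(\bG)$, and type-I-ness is inherited by quotients. Moreover $\Cr(\bG)$ is unital. Thus the entire theorem reduces to the purely \cst-algebraic assertion that a \emph{unital type-I} \cst-algebra admits a finite-dimensional representation.

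To see this last point I would extract a finite-dimensional \emph{simple} quotient. Since $\Cr(\bG)$ is unital, Zorn's lemma furnishes a maximal proper closed two-sided ideal $\cI$ (an upper bound for a chain is the closure of the union, which remains proper because every proper closed ideal is disjoint from the open ball $\{a:\|a-\I\|<1\}$ consisting of invertibles). The quotient $\cC:=\Cr(\bG)/\cI$ is then simple, unital, and---as a quotient of a type-I algebra---again type-I. Now pick any irreducible representation $\pi:\cC\to\cB(H)$: it is faithful by simplicity, type-I-ness forces $\cK(H)\subseteq\pi(\cC)$, and then $\pi^{-1}(\cK(H))$ is a non-zero closed ideal, so simplicity yields $\pi(\cC)=\cK(H)$ and hence $\cC\cong\cK(H)$. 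Unitality forces $\dim H<\infty$, whence $\cC\cong M_n$. The composite $\Cr(\bG)\to\cC\cong M_n$ is the sought finite-dimensional representation, and \Cref{th:coamcrit} delivers co-amenability.

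The only genuine obstacle is the structural input that a simple unital type-I \cst-algebra is isomorphic to a matrix algebra; everything else is formal. This rests on the stability of type-I-ness under quotients together with the characterization of simple type-I algebras as elementary, for which one cites the relevant passages of \cite{dix-cast,arv-inv}. It is worth emphasizing that, although the hypothesis is imposed on $\C(\bG)$, the argument is cleanest when transported to $\Cr(\bG)$: a finite-dimensional representation of $\C(\bG)$ need not factor through the reduced algebra, whereas \Cref{th:coamcrit} is phrased precisely in terms of $\Cr(\bG)$, so producing the representation directly on the type-I quotient $\Cr(\bG)$ is what makes the deduction immediate. Notably, this route bypasses the scaling-group implementation results (\Cref{le:autoinnergen,le:1paraminnergen}), which encode finer information not needed for co-amenability itself.
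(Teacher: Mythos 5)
Your proof is correct and follows essentially the same route as the paper: pass to the unital type-I quotient $\Cr(\bG)$, produce a finite-dimensional representation of it, and invoke \Cref{th:coamcrit}. Your maximal-ideal argument for the key fact that a unital type-I \cst-algebra admits a finite-dimensional representation is exactly the alternative the paper records in the remark following \Cref{le:haschar} (the paper's primary proof of that lemma instead uses the transfinite composition sequence of \cite[Proposition 4.3.3]{dix-cast}).
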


Let $\cA=\C(\bG)$, as in the statement, i.e.~we assume that $\cA$ is type-I. By \cite[Main Theorem]{sakai-gcr} $\cA$ is \emph{GCR}, or \emph{postliminal} in the sense of \cite[Definition 4.3.1]{dix-cast}. Let
\begin{equation}\label{eq:ccrfilt}
  0=\cI_0\subset \cI_1\subset\cdots\subset \cI_{\alpha}=\cA
\end{equation}
the canonical transfinite composition sequence of ideals with postliminal subquotients $\cI_{\beta+1}/\cI_{\beta}$ provided by \cite[Proposition 4.3.3]{dix-cast}, where $\alpha$ is some ordinal number. We need

\begin{lemma}\label{le:haschar}
  A unital type-I \cst-algebra $\cA$ has at least one non-zero finite-dimensional irreducible representation.
\end{lemma}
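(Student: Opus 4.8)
The plan is to obtain the desired representation from a \emph{simple quotient} of $\cA$ rather than from the composition series \Cref{eq:ccrfilt}. First I would use unitality: since $\cA$ is unital, no proper closed two-sided ideal contains the unit, and this property passes to closures of unions of chains, so Zorn's lemma furnishes a maximal proper closed two-sided ideal $\cM\subset\cA$. The quotient $\cB:=\cA/\cM$ is then a \emph{simple} unital \cst-algebra, and as a quotient of a type-I (equivalently GCR/postliminal) algebra it is again type-I.

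The crux is to identify such a $\cB$ with a matrix algebra. Because $\cB$ is simple and non-zero, any pure state yields a non-zero irreducible representation $\pi:\cB\to\cB(H)$, and its kernel, being a proper closed ideal of a simple algebra, is $0$; thus $\pi$ is faithful. The type-I/GCR hypothesis gives $\pi(\cB)\supseteq\cK(H)$, while $\pi^{-1}(\cK(H))$ is a non-zero closed ideal of $\cB$, hence all of $\cB$ by simplicity; together these force $\pi(\cB)=\cK(H)$ and so $\cB\cong\cK(H)$. Finally, unitality of $\cB$ makes $\cK(H)$ unital, which happens only when $H$ is finite-dimensional, whence $\cB\cong M_n$.

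It then remains only to assemble the representation: composing the quotient map $\cA\longrightarrow\cB\cong M_n$ with the (up to equivalence unique) standard irreducible representation of $M_n$ on $\bC^n$ produces a non-zero irreducible representation of $\cA$ of dimension $n<\infty$, as required.

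The one genuinely load-bearing ingredient is the identification of a simple unital type-I \cst-algebra with a matrix algebra; every other step is formal bookkeeping. I would emphasize that unitality is indispensable here — for infinite-dimensional $H$ the algebra $\cK(H)$ is a (non-unital) simple type-I \cst-algebra admitting \emph{no} finite-dimensional irreducible representation — and that this argument sidesteps the transfinite filtration \Cref{eq:ccrfilt} entirely, the maximal-ideal quotient delivering the representation at the top rather than at the bottom of $\cA$.
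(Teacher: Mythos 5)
Your argument is correct, but it is not the route the paper's own proof takes --- and, as it happens, it coincides almost exactly with the alternative argument the paper records in the remark immediately following the lemma. The paper's primary proof works with the canonical transfinite composition sequence \Cref{eq:ccrfilt} of \cite[Proposition 4.3.3]{dix-cast}: unitality forces the indexing ordinal to be a successor (proper closed ideals lie at distance $1$ from the unit $\I$, so $\cA$ cannot be the closure of an ascending union of proper ideals), whence the top subquotient $\cA/\cI_{\beta}$ is a non-zero \emph{unital liminal} \cst-algebra, all of whose irreducible representations are finite-dimensional by \cite[4.7.14]{dix-cast}. You instead pass to a maximal proper closed two-sided ideal (again using unitality, via Zorn's lemma, since closures of unions of chains of proper ideals stay proper) and identify the resulting simple unital type-I quotient $\cB$ with a matrix algebra: any irreducible representation $\pi$ of $\cB$ is faithful by simplicity, the GCR property gives $\pi(\cB)\supseteq\cK(H)$, simplicity applied to the ideal $\pi^{-1}(\cK(H))$ upgrades this to $\pi(\cB)=\cK(H)$, and unitality of $\cK(H)$ forces $\dim H<\infty$. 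Both proofs hinge on unitality in an essential way, just at different points; yours trades the transfinite bookkeeping and the structure theory of liminal algebras for an elementary fact about simple unital GCR algebras, which makes it arguably the lighter-weight argument, while the paper's main proof exposes more structure (an entire liminal top quotient, every one of whose irreducible representations is finite-dimensional, rather than a single matrix quotient). Either way the lemma stands, and your closing observation that $\cK(H)$ for infinite-dimensional $H$ shows unitality cannot be dropped is exactly the right sanity check.
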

\begin{proof}
  The ordinal $\alpha$ in \Cref{eq:ccrfilt} cannot be a limit ordinal: if it were, then by the very definition of a composition sequence (\cite[Definition 4.3.2]{dix-cast}) $\cI_{\alpha}=\cA$ would be the closure of the ascending chain of proper ideals $\cI_{\beta}$, $\beta<\alpha$, contradicting the fact that these are proper ideals in a \emph{unital} \cst-algebra and hence are all at distance $1$ from the unit $\I\in \cA$.

  It follows that $\alpha=\beta+1$ for some ordinal $\beta$, and hence the top liminal quotient
%   \begin{equation}\label{eq:abar}
%     \overline{\cA}:=\cA/\cI_{\beta}
%   \end{equation}
  $\cA/\cI_{\beta}$
  must be a (non-zero!) unital liminal \cst-algebra. It thus follows that all of its irreducible representations are finite-dimensional \cite[4.7.14]{dix-cast}.
\end{proof}

\begin{remark}
  Alternatively one could choose a proper maximal ideal $\cI$ in $\cA$ and note that then $\cA/\cI$ is a type-I simple unital \cst-algebra. Thus any representation of $\cA/\cI$ is faithful, and there exists an irreducible one, say $\phi:\cA/\cI\to\cB(H)$. The range of $\phi$ contains $\cK(H)$, and hence it must be equal to $\cK(H)$ (otherwise $\phi^{-1}(\cK(H))$ would be a proper ideal in $\cA/\cI$), but $\cA/\cI$ is unital and and $\phi$ is faithful, so $H$ must be finite dimensional.
\end{remark}

\begin{proof}[Proof of \Cref{th:autocoam}]
  Let $h$ be the Haar measure of $\bG$ and $\cJ$ the ideal
  \begin{equation*}
    \bigl\{x\in \cA\,\bigr|\bigl.\,h(x^*x)=0\bigr\}\subset \cA.
  \end{equation*}
  The quotient $\cA/\cJ$ will then be the reduced version $\Cr(\bG)$ and again of type I. Since it has a finite-dimensional representation by \Cref{le:haschar}, co-amenability follows from \Cref{th:coamcrit}.
\end{proof}

\section{Extensions of $\bK(\cH)$ by $\C(X)$}\label{se:cpctext}

Throughout the present section, $\cA$ denotes a \cst-algebra fitting into an exact sequence
\begin{equation}\label{eq:theext}
\xymatrix{0\ar[r]&\bK(\cH)\ar[r]&\cA\ar[r]^\pi&\cC\ar[r]&0}
% 0\to \bK(\cH)\longrightarrow \cA\stackrel{\pi}{\longrightarrow} \cC\to 0
\end{equation}
where
\begin{itemize}
  \item $\cC=\C(X)$ for a (non-empty and for us always Hausdorff) compact space $X$,
  \item the ideal $\bK(\cH)$ is as in \Cref{eq:ckh}, where
  \begin{equation}\label{eq:dimge2}
    \cH = \{H_{\lambda}\}_{\lambda\in\Lambda},\quad \dim H_{\lambda}\ge 2,\; \forall\:\lambda\in\Lambda
  \end{equation}
  is a finite, non-empty family of Hilbert spaces.
\end{itemize}
Note that such $\cA$ is automatically of type I and $\bK(\cH)$ is its CCR-ideal.
% if necessary: \cite[Theorem 5.6.2]{murphy}
% @book {murphy,
%     AUTHOR = {Murphy, Gerard J.},
%      TITLE = {{$C^*$}-algebras and operator theory},
%  PUBLISHER = {Academic Press, Inc., Boston, MA},
%       YEAR = {1990},
%      PAGES = {x+286},
%       ISBN = {0-12-511360-9},
%    MRCLASS = {46Lxx (46-01)},
%   MRNUMBER = {1074574},
% MRREVIEWER = {E. Gerlach},
% }
We denote
\begin{equation}\label{eq:dirsum}
  H:=\bigoplus_{\lambda \in \Lambda}H_{\lambda}.
\end{equation}

We list some examples of interest.

\begin{example}%\label[example]{ex:unitk} % without ``[example]'' \Cref gets confused
  For any finite family \Cref{eq:dimge2}, the unitization $\bK(\cH)^+$ satisfies the hypotheses.
\end{example}

\begin{example}%\label[example]{ex:topl} % without ``[example]'' \Cref gets confused
  The \emph{Toeplitz \cst-algebra} $\cT(\partial D)$ (\cite[Definition 2.8.4]{hr-bk}) associated to a {\it strongly (or strictly) pseudoconvex domain} $\Omega\subset \bC^n$ (\cite[\S 3.2]{krntz} or \cite[Definition 1.2.18]{up-bk}) is of the form above, with $\cH$ a singleton.

  This applies in particular to the case when $D$ is the open unit disk in $\bC$. $\cT(\partial D)$ is then the universal \cst-algebra generated by an isometry, and \Cref{th:cpctext} below specializes to the main result of \cite{ks-nqg}.
\end{example}

\begin{example}%\label[example]{ex:podl} % without ``[example]'' \Cref gets confused
  The non-quotient \emph{Podle\'s spheres} introduced in \cite{pod-sph} and surveyed for instance in \cite[\S 2.5, point 5]{dab-gar}. According to \cite[Proposition 1.2]{sheu-quant} those algebras (denoted here collectively by $\cA$) are all isomorphic to the pullback of two copies of the symbol map $\cT\to\C(\bS^1)$. It follows that the \cst-algebra in question fits into an extension
  \begin{equation*}
    \xymatrix{0\ar[r]&\cK(\ell^2)\oplus \cK(\ell^2)\ar[r]&\cA\ar[r]&\C(\bS^1)\ar[r]&0}
%     0\to \cK\oplus \cK\to \cA\to \C(\bS^1)\to 0,
  \end{equation*}
  i.e.~of the form \Cref{eq:theext} for a two-element family $\cH=\{H_{\lambda}\}_{\lambda\in\Lambda}$ of Hilbert spaces.
\end{example}

\begin{example}%\label[example]{ex:cz} % without ``[example]'' \Cref gets confused
  As recalled in \cite[Example, p.123]{lesch}, the algebra $\operatorname{CZ}(M)$ of \emph{Calder\'on-Zygmund operators} (i.e.~pseudo-differential operators of order zero; cf.~e.g.~\cite[\S VI.1]{stein}) on a smooth compact manifold $M$ fits into an exact sequence
  \begin{equation*}
    \xymatrix{0\ar[r]&\cK(L^2(M))\ar[r]&\operatorname{CZ}(M)\ar[r]&\C(S^*\!M)\ar[r]&0}
%     0\to \cK(L^2(M)) \to CZ(M) \to \C(S^*M) \to 0
  \end{equation*}
  where $S^*\!M$ denotes the unit sphere bundle attached to the cotangent bundle of $M$.
\end{example}

With all of this in place, the main result of this section is

\begin{theorem}\label[theorem]{th:cpctext} % without ``[theorem]'' \Cref gets confused
 If $\bG$ is a compact quantum group such that a unital \cst-algebra $\cA=\C(\bG)$ fits into an exact sequence \Cref{eq:theext} as above, then $\cA$ is finite-dimensional.
\end{theorem}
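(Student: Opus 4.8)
The plan is to exploit co-amenability, which \Cref{th:autocoam} already grants since $\cA$ is type I: thus $\cA=\Cr(\bG)$, the Haar state $h$ is faithful, and the counit $\epsilon$ is a genuine (bounded) character of $\cA$. The first reduction is to observe that the whole statement follows once each $H_\lambda$ in \Cref{eq:dimge2} is shown to be finite-dimensional: indeed the faithful representation $\rho=\bigoplus_\lambda\rho^\lambda$ of \Cref{le:autoinnergen} then embeds $\cA$ into $\cB(H)=\bigoplus_\lambda\cB(H_\lambda)$, and since $\Lambda$ is finite this ambient algebra is finite-dimensional, forcing $\dim\cA<\infty$. So the entire problem is to rule out infinite-dimensional blocks.

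Next I would isolate exactly where such blocks could hide. Because $\cK(H_\lambda)$ admits no characters when $\dim H_\lambda\ge 2$, every character of $\cA$---in particular $\epsilon$---kills $\bK(\cH)$ and factors through $\pi$ as evaluation at a point of $X$. More usefully, \Cref{pr:findiminv} (and its corollary) shows that the scaling group $(\tau_s)_{s\in\bR}$ acts trivially on every finite-dimensional representation: applied to $\pi\colon\cA\to\cC=\C(X)$ this gives $\tau_s(a)-a\in\bK(\cH)$ for all $a$, while applied to a block $\rho^\lambda$ with $\dim H_\lambda<\infty$ it forces the implementing operator $b_\lambda$ of \Cref{le:1paraminnergen} to be scalar, i.e.\ $\tau$ to be trivial there. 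Hence $\tau$ can be non-trivial only on infinite-dimensional blocks; conversely, were $\tau$ trivial on all of $\bK(\cH)$ then, being already trivial on the quotient, an easy cocycle-and-norm argument would make it trivial on $\cA$. In other words the theorem is equivalent to proving that $\bG$ is of Kac type.

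That equivalence makes the Kac case the natural warm-up, and it is completely clean: when $\bG$ is Kac the Haar state $h$ is a trace, so $h|_{\cK(H_\lambda)}$ is a bounded faithful tracial functional; since an infinite-dimensional $\cK(H_\lambda)$ carries no nonzero bounded trace, every block is finite-dimensional and $\cA$ is finite-dimensional as above. One is tempted to leverage this in general by passing to the Kac quotient $\cA_{\cat{kac}}$ of \Cref{pr:kac}, whose defining ideal is generated by the elements $\tau_s(a)-a\in\bK(\cH)$ and hence, by simplicity of the $\cK(H_\lambda)$, retains exactly the $\tau$-trivial blocks; but this only re-proves finiteness for the blocks that were already visibly finite-dimensional, and gives no purchase on the $\tau$-non-trivial ones.

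The hard part---the crux of the whole argument---is therefore to exclude an infinite-dimensional block $\cK(H_\mu)$ on which $\tau$ is non-trivial. On such a block both the modular group $\sigma^h$ of the Haar state and the scaling group are, by \Cref{le:1paraminnergen}, inner: $\sigma^h_t=\mathrm{Ad}(T_\mu^{it})$ and $\tau_t=\mathrm{Ad}(b_\mu^{it})$ for commuting positive operators, where $T_\mu$ is the trace-class density of $h|_{\cK(H_\mu)}$ (injective, with unbounded inverse, by faithfulness) and $b_\mu$ is non-scalar. The plan is to contradict this by playing these implementations against the Woronowicz-character description of the structure groups, $\sigma_t=f_{it}*(-)*f_{it}$ and $\tau_t=f_{-it}*(-)*f_{it}$, together with the identity $S^2=\tau_{-i}$ already used above: the analytic continuation $\sigma^h_{-i}\circ S^{-2}$ equals a one-sided convolution $f_2*(-)$ by a character, yet on the block it is the inner conjugation $\mathrm{Ad}(T_\mu b_\mu^{-1})$, and one wants to conclude $T_\mu b_\mu^{-1}$ is scalar. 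Making this precise is exactly the delicate step, because it requires relating the ``\cst-side'' data $(T_\mu,b_\mu)$ living on the Hilbert space of the CCR ideal to the ``corepresentation-side'' data (the matrices $F_\alpha$) living on $\Pol(\bG)$, and it is here that I expect the real work to concentrate.
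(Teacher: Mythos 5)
Your overall skeleton --- co-amenability via \Cref{th:autocoam}, the trace argument in the Kac case, and the reduction of everything to the behaviour of the scaling group on infinite-dimensional blocks --- matches the paper's strategy, but the proposal has two genuine gaps, one of which is the entire crux of the theorem. The step you yourself flag as ``the delicate step'' (excluding an infinite-dimensional block on which $\tau$ is non-trivial) is exactly where the paper's real argument lives, and what you sketch is unlikely to close it. The paper's route is quite different from your Woronowicz-character plan: since $\pi\colon\cA\to\C(X)$ factors through the Kac quotient (\Cref{pr:kac}, \Cref{le:scalepres}), one has $\pi\circ\tau_s=\pi$, so $\pi$ annihilates every matrix coefficient $U^\alpha_{u,v}$ with $\uprho_{\alpha,u}\ne\uprho_{\alpha,v}$; hence the block of $U^\alpha$ corresponding to a fixed eigenvalue $\uprho>1$ of $\uprho_\alpha$ is unitary modulo $\bK(\cH)$, i.e.\ Fredholm, and so has finite-dimensional kernel. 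Combining this with strong commutation of the implementing operators $a_\lambda$ (modular, trace-class by \Cref{le:densop}) and $b_\lambda$ (scaling) --- which itself requires a Stone--von Neumann argument to rule out Heisenberg-type commutation --- the spectral argument of \Cref{le:isbd} forces $\bigoplus_{\lambda}b_\lambda$ to have finite spectrum; then $\tau_{-i}$ extends boundedly, $\bG$ is Kac, contradicting the absence of non-zero bounded traces on infinite-dimensional $\cK(H_\lambda)$. Note also that even the target you set for your delicate step --- showing $T_\mu b_\mu^{-1}$ is scalar --- would not obviously finish the proof: it only makes $b_\mu$ proportional to a trace-class operator, which is not absurd by itself (non-singular positive compact operators exist); one still needs boundedness of $b_\mu^{\pm 1}$, or finiteness of spectrum, to run the ``$\tau_{-i}$ bounded $\Rightarrow$ Kac'' contradiction.

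There is a second, independent error: your opening reduction (``it suffices to show every $H_\lambda$ is finite-dimensional'') is false as stated. If all $H_\lambda$ are finite-dimensional then each $\cK(H_\lambda)=\cB(H_\lambda)$ is a unital ideal of $\cA$, its unit is a central projection, and $\cA$ splits as $\bK(\cH)\oplus\C(X)$; the representation $\rho=\bigoplus_\lambda\rho^\lambda$ then annihilates the $\C(X)$ summand and is \emph{not} faithful. \Cref{le:autoinnergen} does not apply, because its hypothesis $\CCR(\cA)=\bK(\cH)$ fails in this situation (one checks that $\CCR(\cA)=\cA$, since every irreducible representation is then finite-dimensional). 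Concretely, $M_2\oplus\C(X)$ with $X$ infinite satisfies all the \cst-algebraic hypotheses of \Cref{eq:theext}, has all blocks finite-dimensional, and is infinite-dimensional; so no purely \cst-algebraic argument can deliver your reduction, and quantum-group input is unavoidable at this point. This is how the paper proceeds: a finite-dimensional block $p\cA\cong\cB(H_\lambda)$ is a finite-dimensional weakly closed ideal of $\li(\bG)$, and \cite[Theorem 3.4]{dkss} then forces $\cA$ to be finite-dimensional. (Your Kac-case warm-up is fine up to this point: faithfulness of $h$ plus the absence of bounded traces on infinite-dimensional compacts does show all blocks are finite-dimensional --- but you then need the ideal-theoretic step just described, not the faithful-embedding claim, to conclude.)
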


\begin{remark}
  The discreteness hypothesis on the ideal $\bK(\cH)$ in \Cref{th:cpctext} is crucial: according to \cite[Appendix 2]{wor-su2}, for deformation parameters $\mu$ of absolute value $<1$ the function algebra $\C(\operatorname{SU}_{\mu}(2))$ fits into an exact sequence
  \begin{equation*}
    \xymatrix{0\ar[r]&\C(\bS^1)\otimes \cK(\ell^2)\ar[r]&\C(\operatorname{SU}_{\mu}(2))\ar[r]&\C(\bS^1)\ar[r]&0}.
%     0\to \C(\bS^1)\otimes \cK(\ell^2)\to \C(SU_{\mu}(2))\to \C(\bS^1)\to 0
  \end{equation*}
\end{remark}

\begin{remark}
  Let us also note that the fact that we are dealing with a \emph{unital} \cst-algebra $\cA$ is essential for \Cref{th:cpctext} as well. Indeed the \cst-algebras associated with the non-compact quantum ``$az+b$'' groups (\cite{azb,nazb}) are extensions of $\cK(H)$ by $\bC$ for an infinite dimensional separable Hilbert space $H$.
\end{remark}

\Cref{th:cpctext} will require some preparation. We will first address the issue of faithfulness of the Haar measure.

\begin{proposition}\label[proposition]{pr:coamen} % without ``[proposition]'' \Cref gets confused
  Let $\cA$ be an extension of $\bK(\cH)$ by $\C(X)$ as in \Cref{eq:theext} and suppose $\cA=\C(\bG)$ for some compact quantum group $\bG$. Then $\bG$ is co-amenable. In particular $\cA=\C(\bG)$ is reduced.
\end{proposition}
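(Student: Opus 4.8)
The plan is to recognize this proposition as an essentially immediate application of \Cref{th:autocoam}. The decisive point is already recorded in the setup following \Cref{eq:theext}: an algebra $\cA$ fitting into that exact sequence is automatically type-I, being an extension of $\bK(\cH)$ --- a $\mathrm{c}_0$-direct sum (here even a finite direct sum) of elementary \cst-algebras, hence type-I --- by the commutative and therefore type-I algebra $\C(X)$, and the class of type-I \cst-algebras is closed under extensions. Thus $\cA = \C(\bG)$ is type-I.

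With type-I-ness in hand, I would simply invoke \Cref{th:autocoam}, which asserts precisely that any compact quantum group $\bG$ with $\C(\bG)$ type-I is co-amenable. This disposes of the first assertion with no further argument; all the substantive work has been front-loaded into \Cref{th:autocoam} and, through it, into \Cref{le:haschar} and \Cref{th:coamcrit}.

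For the parenthetical reducedness claim, I would appeal to the standard characterization of co-amenability: $\bG$ is co-amenable exactly when the canonical surjection $\Cu(\bG)\to\Cr(\bG)$ is an isomorphism (equivalently, when the Haar state is faithful on $\Cu(\bG)$). Since $\C(\bG)$ factors this surjection as $\Cu(\bG)\twoheadrightarrow\C(\bG)\twoheadrightarrow\Cr(\bG)$, injectivity of the composite forces the first arrow $\Cu(\bG)\to\C(\bG)$ to be injective, hence an isomorphism, and then the second arrow is an isomorphism as well. Consequently $\cA=\C(\bG)=\Cr(\bG)$ is reduced. There is no genuine obstacle in this argument; the only mild care needed is in the passage from co-amenability to reducedness, which is nevertheless routine once the intermediate completion $\C(\bG)$ is squeezed between $\Cu(\bG)$ and $\Cr(\bG)$.
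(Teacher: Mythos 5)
Your proof is correct and takes essentially the same approach as the paper, whose entire proof is the one-line observation that $\cA$ satisfies the hypotheses of \Cref{th:autocoam} (the type-I-ness being already recorded in the setup following \Cref{eq:theext}). Your explicit justification of type-I-ness via closure under extensions, and your derivation of reducedness by squeezing $\C(\bG)$ between $\Cu(\bG)$ and $\Cr(\bG)$, are exactly the routine facts the paper leaves implicit.
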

\begin{proof}
  This is a direct application of \Cref{th:autocoam}, since our \cst-algebra $\cA$ satisfies the hypotheses of that earlier result.
\end{proof}

We henceforth write $\cA=\Cr(\bG)$ to emphasize the faithfulness of the Haar measure, as allowed by \Cref{pr:coamen}.

Recall from \Cref{se:prel} that for each $\lambda$ we have the irreducible representation $\rho^\lambda:\cA\to\cB(H_{\lambda})$ obtained via the canonical extension of the embedding $\cK(H_{\lambda})\hookrightarrow\cB(H_{\lambda})$. Now in the present case $\{\rho^\lambda\}_{\lambda \in \Lambda}$ is precisely the subset of those irreducible representations of $\cA$ which are of dimension strictly greater than one. It follows that the subset $\{\rho^{\lambda}\}_{\lambda\in\Lambda}\subset \widehat{\cA}$ of the spectrum is invariant under every automorphism of $\cA$.\footnote{This also follows from a reasoning similar to the one in the proof of \Cref{le:autoinnergen}.} On the other hand, because that set is discrete in our case, each individual $\rho^{\lambda}$ is invariant under every one-parameter automorphism group of $\cA$. In other words, every one-parameter automorphism group of $\cA$ (e.g.~the modular group $(\sigma_t)_{t\in\bR}$ or the scaling group $(\tau_s)_{s\in\bR}$ coming from the CQG structure, for instance)
\begin{itemize}
  \item restricts to a one-parameter automorphism group of each ideal $\cK(H_{\lambda})$ of $\cA$, and also
  \item induces a one-parameter automorphism group of the image $\cA_{\lambda}$ of $\rho^{\lambda}$.
\end{itemize}

In this context, we have

\begin{lemma}\label{le:densop}
  On each $\cA_{\lambda}\subset\cB(H_{\lambda})$, the modular automorphism $\sigma_t$ of the Haar measure on $\cA$ acts as conjugation by $a_\lambda^{it}$ for some non-singular, positive, trace-class operator $a_\lambda$.
\end{lemma}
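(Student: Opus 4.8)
The plan is to identify the implementing operator $a_\lambda$ with the \emph{density matrix} of the Haar state restricted to the ideal $\cK(H_\lambda)$, and to read its three required properties (positivity, non-singularity, trace-classness) directly off the Haar state. Recall that $\cK(H_\lambda)$ sits inside $\cA=\Cr(\bG)$ as the $\lambda$-summand of the CCR ideal $\bK(\cH)$, and that it is invariant under the modular group $(\sigma_t)_{t\in\bR}$ by the discussion preceding the statement. So first I would restrict the Haar state $h$ to $\cK(H_\lambda)$: this is a non-zero positive functional, hence of the form $x\mapsto\operatorname{Tr}(T_\lambda x)$ for a unique positive trace-class operator $T_\lambda$ on $H_\lambda$, since the dual of $\cK(H_\lambda)$ is the space of trace-class operators via the trace pairing. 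By this very description $T_\lambda$ is \emph{trace-class}, and it is \emph{non-singular}: faithfulness of $h$ (available since $\cA=\Cr(\bG)$) forces $\langle T_\lambda\xi,\xi\rangle = h(P_\xi)>0$ for every nonzero $\xi\in H_\lambda$, where $P_\xi$ is the rank-one projection onto $\bC\xi$. Thus $T_\lambda$ has exactly the three properties demanded of $a_\lambda$, and one expects $a_\lambda:=T_\lambda$.

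It then remains to show that $\sigma_t$ acts as conjugation by $T_\lambda^{it}$. Here I would use two standard facts. First, the KMS condition characterizing the modular group is inherited by $\sigma_t$-invariant \cst-subalgebras: since $\cK(H_\lambda)$ is $\sigma_t$-invariant, the restriction $h|_{\cK(H_\lambda)}$ is KMS with respect to $\sigma_t|_{\cK(H_\lambda)}$. Second, the modular automorphism group of the faithful functional $\operatorname{Tr}(T_\lambda\,\cdot\,)$ is, by the classical Tomita--Takesaki computation on $\cB(H_\lambda)$, conjugation by $T_\lambda^{it}$. By uniqueness of the modular group of a faithful functional these coincide, so $\sigma_t$ acts on $\cK(H_\lambda)$ as $x\mapsto T_\lambda^{it}xT_\lambda^{-it}$ (compatibly with the a priori information from \Cref{le:1paraminnergen}, which produces conjugation by some, possibly unbounded, positive operator; the present argument pins that operator down as trace-class). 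Finally, since $T_\lambda^{it}$ is unitary on $H_\lambda$ and $\cK(H_\lambda)$ is an essential ideal of $\cA_\lambda\subseteq\cB(H_\lambda)=M(\cK(H_\lambda))$, the automorphism of $\cA_\lambda$ induced by $\sigma_t$ is determined by its restriction to $\cK(H_\lambda)$: for $a\in\cA_\lambda$ and $x\in\cK(H_\lambda)$ one has $\sigma_t(a)\sigma_t(x)=\sigma_t(ax)=T_\lambda^{it}(ax)T_\lambda^{-it}=(T_\lambda^{it}aT_\lambda^{-it})\sigma_t(x)$, and non-degeneracy of the $\cK(H_\lambda)$-action yields $\sigma_t(a)=T_\lambda^{it}aT_\lambda^{-it}$ on all of $\cA_\lambda$.

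The only genuinely delicate point, and the one I would be most careful about, is the passage through the KMS/uniqueness argument, namely ensuring that the abstractly-defined modular group $\sigma_t$ of $h$ really restricts to the modular group of the \emph{restricted} functional on the ideal. This is precisely where the $\sigma_t$-invariance of $\cK(H_\lambda)$ (recorded in the preceding discussion) and the uniqueness of modular automorphism groups for faithful functionals do the work; without invariance of the ideal the restriction statement would simply be false. Everything else is routine: the identification of positive functionals on the compacts with trace-class densities, the standard density-matrix form of the modular group on $\cB(H_\lambda)$, and the extension of the conjugation from the essential ideal $\cK(H_\lambda)$ to $\cA_\lambda$.
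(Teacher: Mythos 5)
Your proof is correct, and its core is the same as the paper's: restrict the Haar state to the $\sigma_t$-invariant ideal $\cK(H_\lambda)$, write it as the trace against a positive trace-class density operator (the paper writes this functional as $\operatorname{Tr}\bigl(d^{\frac12}\cdot d^{\frac12}\bigr)$, which is the same as your $\operatorname{Tr}(T_\lambda\,\cdot\,)$), and identify $\bigl.\sigma_t\bigr|_{\cK(H_\lambda)}$ with conjugation by the $it$-th power of that density operator via KMS/modular uniqueness --- a step the paper compresses into ``It follows that'' and which you rightly flag as the delicate point and justify. Where you genuinely diverge is the passage from $\cK(H_\lambda)$ to $\cA_\lambda$: the paper invokes \Cref{le:1paraminnergen} to know a priori that $\bigl.\sigma_t\bigr|_{\cA_\lambda}$ is conjugation by $a_\lambda^{it}$ for some possibly unbounded non-singular positive self-adjoint $a_\lambda$, and then uses irreducibility of $\cK(H_\lambda)$ to conclude that $a_\lambda$ and $d$ are mutual scalar multiples, whence $a_\lambda$ is trace-class; you instead extend the conjugation formula directly from the essential ideal, using multiplicativity of $\sigma_t$ together with non-degeneracy of the $\cK(H_\lambda)$-action (the computation $\sigma_t(a)\sigma_t(x)=\sigma_t(ax)=T_\lambda^{it}axT_\lambda^{-it}=(T_\lambda^{it}aT_\lambda^{-it})\sigma_t(x)$ is sound, since $\sigma_t$ maps $\cK(H_\lambda)$ onto itself). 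Your route is self-contained and avoids \Cref{le:1paraminnergen} here, at the cost of leaning harder on \cst-level KMS uniqueness (legitimate: faithfulness of $h|_{\cK(H_\lambda)}$, available because $\cA=\Cr(\bG)$ by \Cref{pr:coamen}, lets one pass to the normal extension on $\cK(H_\lambda)''=\cB(H_\lambda)$ and apply Takesaki's uniqueness there); the paper's route instead reuses machinery it needs anyway for the scaling group, where no invariant state is available to run your argument. Your explicit derivation of non-singularity of $T_\lambda$ from faithfulness of $h$ is likewise a valid replacement for the paper's appeal to the non-singularity built into \Cref{le:1paraminnergen}.
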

\begin{proof}
  The restriction of the Haar measure $h$ to $\cK(H_{\lambda})$ is of the form $\operatorname{Tr}\bigl(d^{\frac 12}\cdot d^{\frac 12}\bigr)$ for some positive, trace-class operator $d$ on $H_{\lambda}$. It follows that
  \begin{equation*}
    \bigl.\sigma_t\bigr|_{\cK(H_{\lambda})} = \text{conjugation by }d^{it}.
  \end{equation*}

  On the other hand, we know from \Cref{le:1paraminnergen} that
  \begin{equation*}
\bigl.\sigma_t\bigr|_{\cA_{\lambda}}  = \text{conjugation by }a_\lambda^{it}
  \end{equation*}
  for a possibly-unbounded non-singular positive self-adjoint operator $a_\lambda$ on $H_{\lambda}$. Since conjugation by $a_\lambda^{it}$ and $d^{it}$ agree on $\cK(H_{\lambda})$, the operators $a_\lambda$ and $d$ must be mutual scalar multiples. Finally, since $d$ is trace-class, so is $a_\lambda$.
\end{proof}

Recall that by \Cref{le:1paraminnergen}, on each $H_{\lambda}$
\begin{equation*}
\bigl.\tau_s\bigr|_{\cA_\lambda}  = \text{conjugation by }b_\lambda^{is}
\end{equation*}
for a possibly-unbounded positive self-adjoint non-singular operator $b_\lambda$ on $H_{\lambda}$. Moreover, because for each $s,t\in\bR$ the automorphisms $\tau_s$ and $\sigma_t$ commute, conjugation by $b_\lambda^{is}$ and $a_\lambda^{it}$ do too (with $a_\lambda$ as in \Cref{le:densop}).

\begin{proof}[Proof of \Cref{th:cpctext}]
  \sloppy
  If at least one of the spaces $H_{\lambda}$ is finite-dimensional then $\cA$ is finite-dimensional. Indeed, assume that $\dim(H_{\lambda})<+\infty$ for some $\lambda\in\Lambda$ and let $p\in \cA$ be the central projection corresponding to the unit of $\cK(H_{\lambda})$. Then $p\cA$ is a finite dimensional ideal in $\cA$ isomorphic to $\cK(H_{\lambda})=\cB(H_{\lambda})$. It follows that it is also a weakly closed ideal in $\li(\bG)\subseteq\cB(\mathnormal{L}^2(\bG))$, hence the claim is a consequence of \cite[Theorem 3.4]{dkss}.

  Due to the above observation, we assume all $H_{\lambda}$ are infinite-dimensional throughout the rest of the proof, and derive a contradiction. Observe that $\bG$ cannot be of Kac type as $\bK(\cH)$ has no faithful bounded traces.

  \begin{claim}
    The operator $\bigoplus\limits_{\scriptscriptstyle\lambda\in\Lambda}b_\lambda$ implementing the scaling group has finite spectrum.
  \end{claim}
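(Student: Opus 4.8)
The plan is to establish the claim one summand at a time: since $\Lambda$ is finite, $\bigoplus_{\lambda}b_\lambda$ has finite spectrum precisely when each $b_\lambda$ does, so fix $\lambda$ and argue that $\operatorname{spec}(b_\lambda)$ is finite. First I would promote the commutation of the \emph{conjugations} by $a_\lambda^{it}$ and $b_\lambda^{is}$ --- recorded just above --- to commutation of the operators themselves. Since $\cA_\lambda$ (hence $\cB(H_\lambda)$) acts irreducibly, $\operatorname{Ad}(a_\lambda^{it})$ and $\operatorname{Ad}(b_\lambda^{is})$ commuting forces $a_\lambda^{it}b_\lambda^{is}=\omega(s,t)\,b_\lambda^{is}a_\lambda^{it}$ for a scalar $\omega(s,t)\in\bT$, and the group laws make $\omega$ a continuous bicharacter, $\omega(s,t)=e^{i\theta_\lambda st}$. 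Were $\theta_\lambda\neq0$, the pair $(a_\lambda^{it},b_\lambda^{is})$ would be a representation of the Weyl relations with non-trivial central character, hence by Stone--von Neumann a multiple of the Schr\"odinger representation; then $a_\lambda$ would have purely absolutely continuous spectrum $(0,\infty)$, contradicting that $a_\lambda$ is trace-class by \Cref{le:densop}. Thus $\theta_\lambda=0$ and $a_\lambda$, $b_\lambda$ strongly commute.

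Next, because $a_\lambda$ is positive, non-singular and trace-class, $H_\lambda$ is the orthogonal sum of the finite-dimensional eigenspaces $E_n=\ker(a_\lambda-\mu_n)$ for the distinct eigenvalues $\mu_n\downarrow0$, and $b_\lambda$ preserves each $E_n$. Consequently $\operatorname{spec}(b_\lambda)$ is the union of the finite sets $\operatorname{spec}(b_\lambda|_{E_n})$, and the claim reduces to a bound on the distinct eigenvalues of $b_\lambda$ that is uniform in $n$. A second, soft ingredient I would record here is that the scaling group is trivial modulo the ideal: by \Cref{pr:findiminv} every character of $\cA$ --- equivalently every point-evaluation of $\cC=\C(X)$ --- is $\tau$-invariant, and as these separate the points of $X$ we obtain $\tau_s(a)-a\in\bK(\cH)$ for all $a,s$. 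Applying $\rho^\lambda$ this reads $[b_\lambda^{is},y]\in\cK(H_\lambda)$ for every $y\in\cA_\lambda$; i.e.\ the image of $b_\lambda^{is}$ in $\cB(H_\lambda)/\cK(H_\lambda)$ commutes with that of $\cA_\lambda$.

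The hard part is the uniform bound, i.e.\ genuine finiteness of $\operatorname{spec}(b_\lambda)$. Strong commutation with the compact $a_\lambda$ together with Calkin-centrality are by themselves insufficient (they carry no information when $\cA_\lambda=\cK(H_\lambda)^+$), so I would bring in the compact-quantum-group coupling of the two one-parameter groups, namely $\Delta\circ\sigma_t=(\tau_t\otimes\sigma_t)\circ\Delta$ (equivalently, $\sigma_t\tau_{\mp t}$ is one-sided convolution by the Woronowicz characters $f_{2it}$). This expresses the scaling flow as the ``left leg'' of the coproduct of the modular flow, tying $\operatorname{spec}(b_\lambda)$ to the eigenvalue data of the modular operators $a_\nu$. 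The decisive input is that \emph{every} block's modular operator $a_\nu$ is trace-class: its eigenvalues decay to $0$, which should confine the eigenvalues of $b_\lambda$ on the spaces $E_n$ to a single finite set rather than letting them proliferate with $n$. Turning this coupling plus trace-class decay into an honest finiteness statement is the main obstacle; I expect everything preceding it to be routine. That trace-class-ness is essential, and not a technical convenience, is already visible from $\C(\operatorname{SU}_\mu(2))$, where the ideal is $\C(\bS^1)\otimes\cK(\ell^2)$, the modular operator fails to be trace-class, and the scaling operator indeed has infinite spectrum.
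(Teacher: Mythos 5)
Your two preliminary steps do match the paper's proof: the Stone--von Neumann argument promoting commutation of the conjugation actions to strong commutation of $a_\lambda$ and $b_\lambda$ is exactly the one used in the proof of \Cref{th:cpctext} (with the same contradiction, unboundedness against trace-class-ness), and your observation that $\tau_s(a)-a\in\bK(\cH)$ is precisely the paper's statement $\pi\circ\tau_s=\pi$ (obtained there by factoring $\pi$ through the Kac quotient rather than through point evaluations, but the two are equivalent). However, the heart of the claim is exactly the part you leave open. Your proposed mechanism --- eigenspace decomposition of the trace-class $a_\lambda$, Calkin-centrality of $b_\lambda^{is}$, and the hope that ``trace-class decay should confine the eigenvalues of $b_\lambda$'' via the coupling $\Delta\circ\sigma_t=(\tau_t\otimes\sigma_t)\circ\Delta$ --- is a heuristic with no forcing mechanism behind it, and you concede as much. (Also, even granting your reduction, a bound on eigenvalue multiplicities uniform in $n$ would not suffice: each block $E_n$ could contribute a single, but new, eigenvalue.)

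The missing idea is that the paper manufactures a single global Fredholm operator out of the representation theory of $\bG$ and plays it against the spectral decomposition of $B=\I\otimes\bigoplus_\lambda b_\lambda$. Since $\bG$ is not Kac, some $\alpha\in\Irr(\bG)$ has an eigenvalue $\uprho>1$ of $\uprho_\alpha$; in a basis diagonalizing $\uprho_\alpha$, the identity $\pi\circ\tau_s=\pi$ forces $\pi(U^\alpha_{u,v})=0$ whenever $\uprho_{\alpha,u}\neq\uprho_{\alpha,v}$, so the diagonal block $x$ of $U^\alpha$ corresponding to $\uprho$ is unitary modulo $\cK(\bC^d\otimes H)$, hence has finite-dimensional kernel by Atkinson's theorem. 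Its entries are fixed by $\tau_s$ (equal $\uprho$-eigenvalues on both indices), so $x$ commutes with $B^{is}$, while the modular group scales them, giving $A^{it}xA^{-it}=(\uprho')^{it}x$ with $\uprho'>1$ and $A=\I\otimes\bigoplus_\lambda a_\lambda$ bounded. Then \Cref{le:isbd} finishes: if $B$ had infinite spectrum, one could pick infinitely many nonzero spectral projections $E_n$ of $B$; each $H_n=\operatorname{Im}(E_n)$ is invariant under $A$ and $x$, and on $H_n$ the scaling relation makes $x$ move the spectral subspaces of $A$ up by the factor $\uprho'$, which the boundedness of $A$ forbids indefinitely, so $x\big|_{H_n}$ has nonzero kernel for every $n$ --- contradicting $\dim\ker x<\infty$. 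Note where the weight actually sits: it is the finite-dimensionality of the kernel of this one Fredholm block, not the decay of the eigenvalues of $a_\lambda$, that stops the spectrum of $B$ from proliferating across infinitely many spectral subspaces; trace-class-ness enters only through the boundedness of $A$ and the Stone--von Neumann step you already have.
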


  Assuming the claim for now, we can conclude by noting that since
  \begin{equation*}
    \rho\big(\tau_s(z)\bigr)=
    \bigoplus_{\lambda\in\Lambda}\; b_\lambda^{is}\, \rho_\lambda(z)\,b_\lambda^{-is}
  \end{equation*}
  for all $z\in \cA$ and $s\in\bR$, and operators $\bigoplus\limits_{\scriptscriptstyle\lambda\in\Lambda} b_\lambda$, $\bigoplus\limits_{\scriptscriptstyle\lambda\in\Lambda} b_\lambda^{-1}$ are bounded, the analytic generator $\tau_{-i}$ has bounded extension to all of $\cA$. It follows that $\bG$ is of Kac type (see \cite[discussion following Example 1.7.10]{nt-bk}), hence we arrive at a contradiction.

  It thus remains to prove the claim. We will do this with an argument similar to the one used in the proof of \cite[Theorem 13]{ks-nqg}. Let us for each $\alpha\in \Irr(\bG)$ choose a unitary representation $U^{\alpha}\in \alpha$ together with an orthonormal basis in the corresponding Hilbert space in which the positive operator $\uprho_{\alpha}$ is diagonal with entries
  \begin{equation*}
    \uprho_{\alpha,1},\dotsc,\uprho_{\alpha,n_{\alpha}}.
  \end{equation*}
  (cf.~\cite[Section 1.4]{nt-bk})
  Moreover, let $U^{\alpha}_{u,v}$ $(u,v\in\{1,\dotsc,n_\alpha\}$) be the corresponding matrix elements of $U^{\alpha}$. Recall that we have a quotient map
  \begin{equation*}
    \pi:\cA\longrightarrow \cC=\cA/\bK(\cH).
  \end{equation*}
  Clearly it factors through the canonical Kac quotient $\cA_{\cat{kac}}$ (\cite[Appendix]{sol-bohr}), hence thanks to the Lemma \ref{le:scalepres} we have $\pi\circ \tau_s=\pi$ for all $s\in \bR$. On the other hand, $\tau_s$ scales $U^{\alpha}_{u,v}$ by $\uprho_{\alpha,u}^{-is}\uprho_{\alpha,v}^{is}$, and hence non-trivially whenever $\uprho_{\alpha,u}\ne \uprho_{\alpha,v}$. Consequently
  \begin{equation*}
    \pi(U^{\alpha}_{u,v})=0 \quad\text{whenever}\quad \uprho_{\alpha,u}\ne \uprho_{\alpha,v}.
  \end{equation*}

  This means that upon applying $\pi:\cA\to \cC$, the matrix
  \begin{equation*}
    U^{\alpha}=
%     \bigl(U^{\alpha}_{u,v}\bigr)_{u,v=1}^{\dim(\alpha)}
    \begin{bmatrix}
      U^\alpha_{1,1}&\dotsm&U^\alpha_{1,n_\alpha}\\
      \vdots&\ddots&\vdots\\
      U^\alpha_{n_\alpha,1}&\dotsm&U^\alpha_{n_\alpha,n_\alpha}
    \end{bmatrix}
  \end{equation*}
  becomes block-diagonal, with one block for each distinct value in the spectrum of $\uprho_{\alpha}$. Having relabeled that spectrum we can assume that
  \begin{equation*}
    \uprho_{\alpha,1},\dotsc,\uprho_{\alpha,d}
  \end{equation*}
  are all of the instances of a specific eigenvalue $\uprho>1$ in that spectrum. Now, define
  \begin{equation}\label{eq:truncmat}
  x=
  \begin{bmatrix}
    U^\alpha_{1,1}&\dotsm&U^\alpha_{1,d}\\
    \vdots&\ddots&\vdots\\
    U^\alpha_{d,1}&\dotsm&U^\alpha_{d,d}
  \end{bmatrix}
%   \bigl(U^{\alpha}_{u,v}\bigr)_{u,v=1}^{ d}
  \in \cB(\bC^d)\otimes \cB(H)=\cB(\bC^d\otimes H)
  \end{equation}
  to be the block of $U^{\alpha}$ corresponding to $\uprho$.

  The fact that the original matrix $U^{\alpha}$ was unitary and the above remark that off-diagonal $U^{\alpha}_{u,v}$ are annihilated by $\pi$ now imply that \Cref{eq:truncmat} is unitary mod $\cK(\bC^d\otimes H)$. In particular, the operator $x$ has finite-dimensional kernel by Atkinson's theorem (e.g. \cite[Theorem 3.3.2]{arv-spc}).

  Consider the operators
  \begin{equation*}
    A=    \I\otimes \bigl(\bigoplus_{\lambda\in \Lambda} a_\lambda\bigr)
    \quad\text{and}\quad
    B=    \I\otimes \bigl(\bigoplus_{\lambda\in \Lambda} b_\lambda\bigr)
  \end{equation*}
  acting on $\bC^d\otimes H$. Clearly they are positive, self-adjoint and non-singular, $A$ is bounded and $x$, $B^{is}$ commute for all $s\in\bR$. Furthermore, $A^{it},B^{is}$ commute for all $t,s\in\bR$. Indeed, it suffices to argue that $a_\lambda^{it},b_\lambda^{is}$ commute for each $\lambda\in\Lambda$. As conjugation by these unitary operators implements the modular and the scaling group on $\cA_\lambda$, we have
  \begin{equation*}
    a_\lambda^{it}b_\lambda^{is}a_\lambda^{-it}b_\lambda^{-is} = e^{i\frac{\hbar st}{2}},\quad \forall\:s,t\in \bR
  \end{equation*}
  for some fixed $\hbar\in \bR$ (see e.g.~\cite[p.5 and Definition 14.2]{hall}). If $\hbar\ne 0$ then according to the Stone-von Neumann theorem (\cite[Theorem 14.8]{hall}) there is a unitary operator from $H_\lambda$ onto $L^2(\bR)\otimes H_0$ (for some non-zero Hilbert space $H_0$) and identifying
  \begin{align*}
    a_\lambda&\longmapsto \exp\left(-i\hbar\tfrac{d}{dx}\right)\otimes \I_{H_0},\\
    b_\lambda&\longmapsto \bigl(\text{multiplication by $e^x$}\bigr)\otimes\I_{H_0}.
  \end{align*}
  Neither of these operators is bounded, hence we get a contradiction. It follows that $\hbar$ must vanish, so we can henceforth assume that $A$ and $B$ strongly commute. The following lemma gives us the claim and ends the proof.
\end{proof}

\begin{lemma}\label{le:isbd}
  On a Hilbert space $H$, let
  \begin{itemize}
    \item $a$ and $b$ be strongly commuting positive self-adjoint non-singular operators with $a$ bounded,
    \item $x$ be a bounded operator with finite-dimensional kernel, commuting with $b^{is}$ for all $s\in\bR$, and such that
    \begin{equation}\label{eq:qmute}
      a^{it} x a^{-it} = \uprho^{it}x,\quad \forall\:t\in \bR
    \end{equation}
    for some $\uprho>1$.
  \end{itemize}
  Then, $b$ has finite spectrum.
\end{lemma}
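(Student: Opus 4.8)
The plan is to exploit the strong commutation of $a$ and $b$ to organize $H$ into a ladder of spectral subspaces of $a$ on which $x$ acts as a one-step shift, and then to propagate the finite-dimensionality of $\ker x$ through this ladder; this is the Hilbert-space analogue of the weighted-shift bookkeeping behind \cite[Theorem 13]{ks-nqg}.

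First I would convert the hypothesis \eqref{eq:qmute} into a clean intertwining of the spectral projections of $a$. Writing $A=\log a$ (self-adjoint and bounded above, since $a$ is bounded and non-singular), the relation $a^{it}xa^{-it}=\uprho^{it}x$ reads $e^{itA}xe^{-itA}=e^{it\log\uprho}x$, and a routine Fourier-calculus argument then gives $g(A)x=x\,g(A+\log\uprho)$ for bounded Borel $g$, that is,
\[
  x\,E(J)=E(\uprho J)\,x\qquad\text{for every Borel }J\subseteq(0,\infty),
\]
where $E$ denotes the spectral measure of $a$. I would then partition $(0,\|a\|]\supseteq\operatorname{sp}(a)$ into the $\uprho$-adic rungs $I_n=(\|a\|\uprho^{-n-1},\|a\|\uprho^{-n}]$, $n\ge0$, chosen so that $\uprho I_n=I_{n-1}$ exactly, and set $P_n=E(I_n)$, $H_n=\operatorname{ran}P_n$, giving $H=\bigoplus_{n\ge0}H_n$. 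The displayed relation yields $xP_n=P_{n-1}x$, so $x$ restricts to $x_n:=x|_{H_n}\colon H_n\to H_{n-1}$, while $xP_0$ lands in $E(\uprho I_0)=E\big((\|a\|,\|a\|\uprho]\big)=0$, forcing $H_0\subseteq\ker x$. Since $a$ and $b$ strongly commute, $b$ preserves each $H_n$; writing $b_n=b|_{H_n}$ and using $xb=bx$ one obtains the intertwiners $x_nb_n=b_{n-1}x_n$.

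Next I would cascade the finiteness of $\ker x$ up the ladder. The inclusion $H_0\subseteq\ker x$ forces $\dim H_0<\infty$, and inductively $\ker x_n=H_n\cap\ker x$ is finite-dimensional while $\operatorname{ran}x_n\subseteq H_{n-1}$, so $\dim H_n\le\dim\ker x_n+\dim H_{n-1}<\infty$; hence every rung $H_n$ is finite-dimensional. Because $x$ is block sub-diagonal one has $\ker x=\bigoplus_n\ker x_n$, so $\sum_n\dim\ker x_n=\dim\ker x<\infty$ and there is an $n_0$ beyond which $\ker x_n=0$, i.e. $x_n$ is injective for all $n>n_0$.

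Finally I would read off the spectrum of $b$. For $n>n_0$ the injective intertwiner $x_n\colon H_n\to H_{n-1}$ identifies $b_n$ with the restriction of $b_{n-1}$ to the closed invariant subspace $\operatorname{ran}x_n$ (closed since $H_n$ is finite-dimensional), whence $\operatorname{sp}(b_n)\subseteq\operatorname{sp}(b_{n-1})$. This produces a decreasing chain $\operatorname{sp}(b_{n_0})\supseteq\operatorname{sp}(b_{n_0+1})\supseteq\cdots$, so $\bigcup_{n>n_0}\operatorname{sp}(b_n)\subseteq\operatorname{sp}(b_{n_0})$, a finite set; adjoining the finitely many finite sets $\operatorname{sp}(b_0),\dots,\operatorname{sp}(b_{n_0})$ shows $\bigcup_n\operatorname{sp}(b_n)$ is finite. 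Since $b=\bigoplus_n b_n$, its spectrum is the closure of this union and is therefore finite. The main obstacle I anticipate is the bookkeeping at the top of the ladder: the rungs must be $\uprho$-adically aligned so that $x$ shifts by exactly one step and annihilates $H_0$, and the general (possibly non-atomic) spectrum of $a$ must be handled entirely through the projections $E(I_n)$ rather than through eigenvectors; the final spectral-containment step also depends on each rung being finite-dimensional, which is precisely why the $\ker x$ cascade has to be carried out first.
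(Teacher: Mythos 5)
Your proof is correct, but it takes a genuinely different route from the paper's. The paper argues by contradiction and decomposes along $b$ rather than $a$: if $\operatorname{sp}(b)$ were infinite, the spectral measure of $b$ would split $H$ into infinitely many non-zero subspaces $H_n$, each reducing for $a$ (strong commutation) and for $x$ (commutation with $b^{is}$); on each $H_n$ the relation \Cref{eq:qmute} together with the boundedness of $a$ forces $\ker\bigl(x|_{H_n}\bigr)\neq 0$, because the top $\uprho$-adic slice of $\operatorname{sp}\bigl(a|_{H_n}\bigr)$ is shifted by $x$ into an empty part of the spectrum, and infinitely many mutually orthogonal kernel vectors contradict $\dim\ker x<\infty$. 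You instead decompose along the spectral measure of $a$ into the $\uprho$-adic rungs, prove by a cascade that every rung is finite-dimensional and that $x$ is eventually injective from rung to rung, and then transport $\operatorname{sp}(b_n)$ along the injective intertwiners into the fixed finite set $\operatorname{sp}(b_0)\cup\dots\cup\operatorname{sp}(b_{n_0})$. Both arguments turn on the same mechanism --- the intertwining $xE(J)=E(\uprho J)x$ of the spectral measure of $a$, plus the ceiling $\|a\|$ --- but the paper's version is considerably shorter, while yours is direct (no contradiction) and yields strictly more information: all $\uprho$-adic spectral rungs of $a$ are finite-dimensional, and $\operatorname{sp}(b)$ is pinned inside an explicit finite set. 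One small point of hygiene: before the rungs are known to be finite-dimensional, $b_n$ may be unbounded, so the identity $x_nb_n=b_{n-1}x_n$ should at that stage be read as $x_nb_n^{is}=b_{n-1}^{is}x_n$, which is what the hypothesis literally gives you; once you restrict to $n>n_0$, where everything is finite-dimensional, differentiating at $s=0$ recovers the operator identity, and similarity (rather than unitary equivalence) of $b_n$ with $b_{n-1}|_{\operatorname{ran}x_n}$ is indeed enough for the spectral inclusion.
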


\begin{proof}
  Naturally, it suffices to assume $H$ is infinite-dimensional (otherwise there is nothing to prove).
  Let us denote by
  \begin{equation*}
    \text{Borel subsets of $\bR$}\ni\Omega\longmapsto E_{\Omega}\in\text{Projections on }H
  \end{equation*}
  the spectral resolution of $b$. If the latter has infinite spectrum, we could partition $\bR$ into infinitely many $\Omega_n$, $n\in \bZ_{\ge 0}$ with $E_n:=E_{\Omega_n}$ non-zero.

  Because $a$ and $b$ strongly commute, $a$ preserves the subspaces $H_n:=\operatorname{Im}(E_n)$ and thus admits a spectral resolution
  \begin{equation*}
    \Omega\longmapsto P_{n,\Omega}
  \end{equation*}
  thereon. By \Cref{eq:qmute} and the fact that $x$ and $b$ strongly commute, $x$ maps each range $\mathrm{Im}(P_{n,\Omega})$ to $P_{n,\uprho\Omega}$. The boundedness of $a$ means that we cannot scale by $\uprho>1$ indefinitely, so the kernel of $\bigl.x\bigr|_{H_n}$ is non-zero for all $n$. Since there are infinitely many summands $H_n$, we are contradicting the assumption on the finite-dimensionality of $\ker{x}$.
\end{proof}

\begin{remark}
  Due to the argument in the proof of \Cref{th:cpctext} showing that $A^{it}$ and $B^{is}$ commute, \Cref{le:isbd} in fact goes through under the formally weaker assumption that the conjugation actions by $a^{it}$ and $b^{is}$ commute on the algebra of compact operators.
\end{remark}

%\bibliography{bib}{}
%\bibliographystyle{plain}
%\bibliographystyle{amsalpha_FI}

% \addcontentsline{toc}{section}{References}

\providecommand{\bysame}{\leavevmode\hbox to3em{\hrulefill}\thinspace}
\providecommand{\MR}{\relax\ifhmode\unskip\space\fi MR }
% \MRhref is called by the amsart/book/proc definition of \MR.
\providecommand{\MRhref}[2]{%
  \href{http://www.ams.org/mathscinet-getitem?mr=#1}{#2}
}
\providecommand{\href}[2]{#2}

\end{document}